\author{H. Egger$^{1,2}$, S. Kurz$^3$, R. L\"oscher$^4$}
\address{%
$^1$Institute of Numerical Mathematics, Johannes Kepler University Linz, Austria \\
$^2$Johann Radon Institute for Computational and Applied Mathematics, Linz, Austria\\
$^3$Faculty of Information Technology, University of Jyväskylä, Finland\\
$^4$Institute of Numerical Mathematics, TU Graz, Austria}
\email{herbert.egger@jku.at}
\email{stefan.m.kurz@jyu.fi}
\email{loescher@math.tugraz.at}
\title[Exponential stability of damped wave equations]{On the exponential stability \\of uniformly damped wave equations}
\newtheorem{lemma}{Lemma}[section]
\newtheorem{theorem}[lemma]{Theorem}
\theoremstyle{definition}
\newtheorem{remark}[lemma]{Remark}
\def\dt{\partial_t}
\def\dtt{\partial_{tt}}
\def\ddt{\frac{\mathrm{d}}{\mathrm{d}t}}
\def\dtau{d_\tau}
\def\ddtau{d_{\tau\tau}}
\def\div{\operatorname{div}}
\def\curl{\mathrm{curl}}
\def\eps{\epsilon}
\def\d{\operatorname{d}}
\def\cb{c_{\beta}}
\def\Cb{C_{\beta}}
\def\CP{C_{\mathrm{P}}}
\def\E{\mathcal{E}}
\def\N{\mathcal{N}}
\def\P{\mathcal{P}}
\def\T{\mathcal{T}}
\def\R{\mathcal{R}}
\def\N{\mathcal{N}}
\def\D{\mathcal{D}}
\def\RT{\mathcal{RT}}
\def\AAW{\mathcal{AAW}}
\def\RR{\mathbb{R}}
\begin{document}

\begin{abstract}
We study damped wave propagation problems phrased as abstract evolution equations in Hilbert spaces. Under some general assumptions, including a natural compatibility condition for initial values, we establish exponential decay estimates for all mild solutions using the language and tools of Hilbert complexes. This framework turns out strong enough to conduct our analysis but also general enough to include a number of interesting examples. Some of these are briefly discussed. By a slight modification of the main arguments, we also obtain corresponding decay results for numerical approximations obtained by compatible discretization strategies. 
\end{abstract}

\maketitle

%%%
\section{Introduction}
\label{sec:intro}

Wave propagation through 
media is typically accompanied by some sort of damping, e.g., through friction, conduction, etc. This leads to dissipation of energy and eventually to convergence of the system to a steady state. 
In this paper, we study a general class of damped wave propagation problems of the common abstract form 
\begin{alignat}{2}
\alpha \dt u - \d^* u^* &= -\beta u  \qquad && \text{in } W^1, \ t \ge 0, \label{eq:1}\\
\gamma \dt u^* + \d u &= 0 \qquad && \text{in } W^2, \ t \ge 0. \label{eq:2}
\end{alignat}
Here $W^1$, $W^2$ are two Hilbert spaces, 
%$\d : \mathcal{D}(\d)\subset W^1 \to \mathcal{R}(\d)\subset W^2$ 
$\d : W^1 \to W^2$ 
is a densely defined and closed linear operator, and 
%$\d^* : \mathcal{D}(\d^*)\subset W^2 \to \mathcal{R}(d^*)\subset W^1$ 
$\d^* : W^2 \to W^1$ its adjoint. 
Furthermore, $\alpha,\beta: W^1 \to W^1$ and $\gamma: W^2 \to W^2$ are selfadjoint positive isomorphisms, generating scalar products and norms on $W^1$ and $W^2$, respectively. 
The spaces and operators form a segment of a Hilbert complex \cite{Arnold2010,Bruening1992,Holst2012} 
\begin{figure}[ht!]
\centering
\begin{tikzpicture}
\node (W1) at (0,2) {$W^1$};
\node (W2) at (2,2) {$W^2$};
\node (W1s) at (0,0) {$W^1$}; 
\node (W2s) at (2,0) {$W^2$}; 
\draw [thick,->] (W1) -- node[above,midway] {$\d$} (W2);
\draw [thick,->] (W2s) -- node[above,midway] {$\d^*$} (W1s);
\draw [thick,->] (W1) -- node[left,midway] {$\alpha$} node[right,midway] {$\beta$}(W1s);
\draw [thick,->] (W2s) -- node[right,midway] {$\gamma$} (W2);
\end{tikzpicture}
\end{figure}
and the operators $\alpha, \beta$ and $\gamma$ map isomorphically between the primal and the dual complex, i.e, between the upper and lower row of the illustration. 
This setting turns out strong enough to analyse the long time stability of \eqref{eq:1}--\eqref{eq:2}, but at the same time, general enough to cover a variety of interesting examples, ranging from electromagnetics, to acoustics and elastodynamics, as well as their discretization, and even to certain differential equations on graphs. 

% %\subsection*{Electrodynamics.}
%\textbf{Example.} 
As a prototypical example for our setting, let us consider Maxwell's equations in a linear conducting medium. The governing equations read~\cite{Dautray5,Stratton}
\begin{alignat}{2}
\eps \dt E - \curl H &= -\sigma E  \qquad &&\text{in } \Omega, \ t \ge 0, \label{eq:3} \\
\mu \dt H + \curl_0 E &= 0 \qquad  && \text{in } \Omega, \ t \ge 0. \label{eq:4}
\end{alignat}
Here $\curl_0 E$ denotes the $\curl$ operator with zero boundary conditions $E \times n=0$.
Sufficiently smooth solutions of \eqref{eq:3}--\eqref{eq:4} can be shown to satisfy
\begin{align} \label{eq:5}
\ddt \frac{1}{2}(\|E(t)\|_\eps^2 + \|H(t)\|_\mu^2) = - \|E(t)\|_\sigma^2, \qquad t \ge 0,
\end{align}
where $\|u\|_\kappa^2 = \int_\Omega \kappa |u|^2 \, dx$ denotes the norm generated by a coefficient $\kappa$. 
The non-increase of the energy is directly encoded in the port-Hamiltonian structure of the system \cite{jacob2012linear,Macchelli2004,Rashad2020}, which becomes evident in the weak formulation of the problem.

A similar power balance also holds for systems of the abstract form \eqref{eq:1}--\eqref{eq:2}. Under the general assumptions mentioned above and a natural compatibility condition on the initial data, we can even establish exponential decay of the energy. 
\begin{theorem} \label{thm:main} 
Any mild solution $(u,u^*) \in C([0,\infty);W^1 \times W^2)$ of \eqref{eq:1}--\eqref{eq:2}, which satisfies the compatibility condition $\gamma u^*(0) \in \R(\d)$, decays exponentially, i.e., 
\begin{align} \label{eq:decay}
\|u(t)\|_\alpha^2 + \|u^*(t)\|_{\gamma}^2 \le C' e^{-c'(t-s)} \big(\|u(s)\|_\alpha^2 + \|u^*(s)\|_{\gamma}^2\big), 
\end{align}
for all $0 \le s \le t$ with constants $C'$, $c' >0$ independent of the particular solution. 
\end{theorem}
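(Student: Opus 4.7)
The plan is to construct a Lyapunov functional $L(t)$ that is equivalent to the natural energy $E(t)=\tfrac12(\|u(t)\|_\alpha^2+\|u^*(t)\|_\gamma^2)$ but decays exponentially, and then to transfer the estimate from strong to mild solutions by a density argument. The starting point is the power balance $\ddt E(t)=-\|u(t)\|_\beta^2$, obtained by pairing \eqref{eq:1} with $u$ and \eqref{eq:2} with $u^*$ and using the adjoint relation $(\d^*u^*,u)_{W^1}=(u^*,\d u)_{W^2}$; this yields non-increase of $E$. The same identity shows that the compatibility condition is preserved in time, since \eqref{eq:2} reads $\gamma\dt u^*=-\d u\in\R(\d)$, and closedness of $\R(\d)$ combined with integration in $t$ gives $\gamma u^*(t)\in\R(\d)$ for every $t\ge0$.

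The next ingredient is an auxiliary potential. Using the closed range property, for each $t$ let $\phi(t)\in\dom(\d)\cap\N(\d)^\perp$ be the unique element with $\d\phi(t)=\gamma u^*(t)$; the Poincaré inequality associated with this Hilbert complex then provides a constant $\CP$ with $\|\phi(t)\|\le\CP\|\gamma u^*(t)\|$ and hence $\|\phi(t)\|\le C\,\|u^*(t)\|_\gamma$. Differentiating the relation $\d\phi=\gamma u^*$ in $t$ and using \eqref{eq:2} gives $\d(\dt\phi+u)=0$; since $\phi$ is confined to the closed subspace $\N(\d)^\perp$, so is $\dt\phi$, and hence $\dt\phi=-Pu$, where $P$ denotes the orthogonal projector onto $\N(\d)^\perp$. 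In particular $\|\dt\phi\|\le\|u\|$.

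With these preparations in place, the Lyapunov functional is defined as
\begin{equation*}
L(t):=E(t)-\epsilon\,(\phi(t),\alpha u(t))_{W^1},
\end{equation*}
with $\epsilon>0$ to be chosen. The bound $|(\phi,\alpha u)_{W^1}|\le C\,\|\phi\|\,\|u\|\le C'\,\|u^*\|_\gamma\,\|u\|_\alpha\le C''\,E$ shows that $L$ is equivalent to $E$ for $\epsilon$ sufficiently small. A direct computation, substituting \eqref{eq:1} into $(\phi,\alpha\dt u)_{W^1}$ and using $(\phi,\d^*u^*)_{W^1}=(\d\phi,u^*)_{W^2}=\|u^*\|_\gamma^2$, yields
\begin{equation*}
L'(t)=-\|u\|_\beta^2-\epsilon\,\|u^*\|_\gamma^2+\epsilon\,(\phi,\beta u)_{W^1}-\epsilon\,(\dt\phi,\alpha u)_{W^1}.
\end{equation*}
The two cross terms are controlled by $\|u^*\|_\gamma\,\|u\|_\beta$ and $\|u\|_\beta^2$ respectively, using the bounds on $\phi$ and $\dt\phi$ together with the norm equivalences induced by the isomorphisms $\alpha$ and $\beta$. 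Young's inequality absorbs them into the dissipation once $\epsilon$ is small enough, giving $L'(t)\le -c'\,L(t)$, and Grönwall's lemma then yields exponential decay of $L$, and hence of $E$, in the form \eqref{eq:decay}.

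The main obstacle is the extension from strong to mild solutions, since both the chain rule for $L$ and the identity $\dt\phi=-Pu$ presuppose time regularity that a general mild solution need not possess. I would handle this by density: the compatibility set $\{(v,v^*)\in W^1\times W^2:\gamma v^*\in\R(\d)\}$ is closed and invariant under the semigroup generated by \eqref{eq:1}--\eqref{eq:2}, and its intersection with the domain of the generator is dense in it, so every compatible mild solution is a uniform limit on compact time intervals of strong solutions, for which \eqref{eq:decay} has already been established; the estimate then passes to the limit by continuity of both sides in the initial data. An attractive alternative is to derive the integrated bound $\int_s^\infty E(\tau)\,\d\tau\le C\,E(s)$ for strong solutions and to invoke Datko's theorem applied to the semigroup restricted to the invariant compatibility subspace.
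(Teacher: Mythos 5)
Your argument is correct and arrives at the same type of Lyapunov-functional estimate, but via a genuinely different auxiliary construction than the paper. The paper integrates the solution in time to obtain a primitive $(w,w^*)$, fixes its initial value by solving the saddle-point system \eqref{eq:aux01}--\eqref{eq:aux02} via a Brezzi-type lemma (this is where the compatibility condition enters), and works with the modified energy \eqref{eq:mod}, $\E_\delta=\E_0+\delta\langle \dt w,w\rangle_\alpha$; since $\d w=-\gamma u^*$ by \eqref{eq:aux2}, the paper's $w$ plays the role of your $-\phi$, except that $w$ may carry a component in $N(\d)$, which the paper must estimate separately through \eqref{eq:var1w} (the $\tfrac{1}{\cb}\|\dt w\|_\alpha$ contribution in \eqref{eq:est}). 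You instead invert $\d$ pointwise in time on a complement of its kernel, getting $\phi(t)$ with $\d\phi(t)=\gamma u^*(t)$, and use the cross term $-\eps\langle\phi,u\rangle_\alpha$. What your route buys: no mixed system for initial values, no kernel-component estimate (your $\phi$ is kernel-free by construction, and $\dt\phi=-Pu$ gives the needed bound directly), and a cleaner passage to mild solutions -- the paper invokes Remark~\ref{rem:dense} without spelling out that the approximating classical data must themselves be compatible, whereas restricting the semigroup to the closed invariant subspace $\{(v,v^*):\gamma v^*\in\R(\d)\}$, as you propose, makes the density of compatible regular data a consequence of the generator of the restricted semigroup being densely defined. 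What it costs: you need closedness of $\R(\d)$ and time-invariance of the compatibility condition -- both available here, since closedness of the range follows from the Poincar\'e inequality \eqref{eq:poincare} together with closedness of $\d$, and invariance follows from the integrated form of \eqref{eq:2} -- as well as differentiability of $t\mapsto\phi(t)$, which holds because the inverse of $\d$ on its kernel complement is bounded. Two small points to tidy up: the paper states \eqref{eq:poincare} on $N(\d)^{\perp_\beta}$, so either transfer it to $N(\d)^{\perp}$ (minimal-norm representative argument, with a modified constant) or place $\phi(t)$ in $N(\d)^{\perp_\beta}$ and replace $P$ by the $\beta$-orthogonal projection; and your constants remain implicit, while the paper's choice $\delta\le\delta^{**}$ yields the explicit values $C'=3$, $c'=\tfrac{2}{3}\delta^{**}$ -- harmless for the statement as formulated, but worth noting if quantitative rates are of interest.
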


The setting of Hilbert complexes has been used very successfully to study a variety of systems of partial differential equations arising in the natural sciences, both, from an analytical and a numerical point of view; see~\cite{Arnold2018,Arnold2010,Bruening1992} for details and references. 
For instance, it allows to establish existence of mild and classical solutions of \eqref{eq:1}--\eqref{eq:2} by means of semigroup theory~\cite{Dautray5,Pazy83}. 
In this paper, we aim to utilize the framework of Hilbert complexes to prove exponential decay of uniformly damped wave equations in an abstract setting. On the one hand, this approach allows to clarify the main ingredients required for the analysis, and on the other hand, it is transferable to discretizations. 

\subsection*{Main arguments}
For the proof of our main result, we use refined energy estimates which exploit the transfer between the two components of the energy through the differential operators $\d$ and $\d^*$. We further employ a variational characterization of solutions and a small number of assumptions, which can be verified for many applications and for appropriate discretizations. 
For some particular examples, a proof of \eqref{eq:decay} can already be found in the literature: 
In \cite{Eller2019}, the decay estimate of our main theorem was shown for the Maxwell system but via different arguments. 
In that case, the compatibility condition amounts to $B(0) = \mu H(0) = \curl_0 A(0)$ for some vector potential $A(0)$ which, in particular, implies $\div B(0)=0$, and this condition  has a clear physical meaning.
Similar decay estimates can also be found for other types of systems; see, e.g., \cite{Kugler2018,Haraux1987,Zuazua1990}. 
Results for system with strong damping or  boundary damping can be found, for instance, in \cite{Ervedoza2009,Ikehata2013,Lagnese83}.
An exponential stability result for abstract evolution problems was established in \cite[Thm. 2.3]{Humaloja2019}.
The main theorem of the paper generalizes some of these results and simplifies the proofs, providing more insight into the underlying mathematical structures and tighter estimates for the decay rate. 

\subsection*{Outline}
The remainder of the manuscript is organized as follows: 
In Section~\ref{sec:prelim},
we formally introduce our assumptions and some preliminary results required for the proof of Theorem~\ref{thm:main},
which is presented in Section~\ref{sec:proof}.
In Section~\ref{sec:disc}, we show that the exponential stability result and its proof carry over almost verbatim to appropriate discretizations. 
A couple of further examples is presented in Section~\ref{sec:examples}, for which our theoretical results apply immediately, 
and we close the presentation with a short summary and outlook to further possible extensions.

\section{Preliminaries and notation}
\label{sec:prelim}

Let us briefly introduce our main assumptions and some preliminary results required for the proof of our main theorem. Further details can be found in \cite{Arnold2018,Arnold2010}, for instance.

\subsection{Assumptions}\label{subsec:assumptions}
We consider real Hilbert spaces $W^1$, $W^2$ with scalar products denoted by $\langle w,w^*\rangle_{W^k}$, $k=1,2$. 
Further let $\mathrm{d}:V^1 \subset W^1 \to W^2$ be a densely defined closed linear operator with domain $V^1=\{v^1 \in W^1: d v^1 \in W^2\}$. By 
\begin{align} \label{eq:adjoint}
    \langle \d^* u^*, u\rangle_{W^1} = \langle u^*, \d u\rangle_{W^2} \qquad \forall u \in V^1, \ u^* \in V_2^*
\end{align}
we define the action of the adjoint operator $\d^*: V_2^* \subset W^2 \to W^1$, which is again densely defined and closed~\cite{Yosida}.  
Finally, let $\alpha,\beta : W^1 \to W^1$ and $\gamma : W^2 \to W^2$ be selfadjoint and positive isomorphisms. We denote by
\begin{align} \label{eq:norms}
\langle u,v\rangle_\alpha & =\langle \alpha u, v \rangle_{W^1}, \qquad \|u\|_\alpha^2 = \langle u,u\rangle_\alpha
\end{align}
the scalar product and norm generated by the operator $\alpha$. By the previous assumptions, they are equivalent to the natural scalar product and norm of $W^1$. 
In a similar manner, the operators $\beta$ and $\gamma$, and the inverses $\alpha^{-1}$, $\gamma^{-1}$ define equivalent scalar products and norms on $W^1$ and $W^2$, respectively.

\subsection{Preliminaries}
Under the conditions above, which we assume to hold throughout the manuscript, the norms generated by $\alpha$ and $\beta$ are equivalent. In particular
\begin{align}\label{eq:norm_equivalence}
\cb \|u\|_\alpha^2 \le \|u\|_\beta^2
\le \Cb \|u\|_\alpha^2 
\qquad \forall u \in W^1
\end{align}
with equivalence constants
$0<\cb\leq C_\beta$, which are introduced here for later reference. 
Our general assumptions further imply the validity of a Poincar\'e inequality
\begin{align} \label{eq:poincare}
\|u\|_\alpha \le \CP \|\d u\|_{\gamma^{-1}} \qquad \forall u \in N(\d)^{\perp_\beta},
\end{align}
where $N(\d)^{\perp_\beta}=\{v \in V^1 : \langle \beta v, z\rangle_{W^1}=0 \ \forall z \in N(\d)\}$; see \cite[Ch.~4]{Arnold2018} for details.

\begin{remark}
For the Maxwell system mentioned in the introduction, the constants that appear in the assumptions can be given a physical interpretation, as follows:
\begin{equation}\label{eq:time_constants}
\cb=\frac{1}{\tau_\mathrm{r,max}}\,,\quad
\Cb=\frac{1}{\tau_\mathrm{r,min}}\,,\quad
\CP=\Delta t\,,
\end{equation}
where $\tau_\mathrm{r,max}$, $\tau_\mathrm{r,min}$ are the maximal and minimal dielectric relaxation times $\sim\eps/\sigma$, respectively, and $\Delta t$ is a characteristic traversal time of light %with velocity $1/\sqrt{\eps\mu}$ 
through the domain.
\end{remark}
%}

As a first step of our analysis, we discuss the existence of solutions to problem \eqref{eq:1}--\eqref{eq:2}, which can be proven using basic results of semigroup theory; see, e.g., \cite{Dautray5,Pazy83}. 
\begin{lemma}
For any $(u_0,u_0^*) \in W^1 \times W^2$, the system \eqref{eq:1}--\eqref{eq:2} has a unique mild solution $(u,u^*) \in C([0,\infty);W^1 \times W^2)$ with $u(0)=u_0$ and $u^*(0)=u_0^*$. 
If $(u_0,u_0^*) \in V^1 \times V_2^*$, then $(u,u^*) \in C^1([0,\infty);W^1 \times W^2) \cap C([0,\infty);V^1 \times V_2^*)$ is a classical solution.     
\end{lemma}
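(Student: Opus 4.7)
The plan is to cast the system as an abstract Cauchy problem in the product space and appeal to the Lumer--Phillips theorem. Set $H = W^1 \times W^2$ endowed with the scalar product $\langle (u,u^*),(v,v^*)\rangle_H = \langle u,v\rangle_\alpha + \langle u^*,v^*\rangle_\gamma$, which by the assumptions on $\alpha,\gamma$ is equivalent to the canonical one. Rearranging \eqref{eq:1}--\eqref{eq:2} yields $\dt U = A U$ with $U=(u,u^*)$ and
\begin{align*}
A(u,u^*) = \bigl(\alpha^{-1}\d^* u^* - \alpha^{-1}\beta u,\; -\gamma^{-1} \d u\bigr), \qquad \dom(A) = V^1 \times V_2^*.
\end{align*}
Since $\d$, $\d^*$ are densely defined and closed, and $\alpha,\beta,\gamma$ are bounded isomorphisms, $A$ is closed and densely defined on $H$.

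The first key step is dissipativity. A direct computation, using the definition of $\d^*$ in \eqref{eq:adjoint} and the norms in \eqref{eq:norms}, gives
\begin{align*}
\langle AU, U\rangle_H
= \langle \d^* u^* - \beta u, u\rangle_{W^1} - \langle \d u, u^*\rangle_{W^2}
= -\|u\|_\beta^2 \le 0
\end{align*}
for every $U=(u,u^*) \in \dom(A)$, so $A$ is dissipative with respect to the $(\alpha,\gamma)$-inner product.

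The second and main step is the range condition: for some (equivalently, every) $\lambda > 0$, the operator $\lambda I - A : \dom(A) \to H$ is surjective. Given $(f,f^*) \in H$, solving $(\lambda I - A)(u,u^*) = (f,f^*)$ amounts to eliminating $u^* = \lambda^{-1}(f^* - \gamma^{-1}\d u)$ from the second equation and substituting into the first, leading to the variational problem
\begin{align*}
\lambda \langle (\lambda \alpha + \beta) u, v\rangle_{W^1} + \lambda^{-1} \langle \gamma^{-1} \d u, \d v\rangle_{W^2} = \langle \alpha f, v\rangle_{W^1} + \lambda^{-1}\langle f^*, \d v\rangle_{W^2}
\end{align*}
for $u \in V^1$, for all $v \in V^1$. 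The left-hand side is a symmetric, bounded, and coercive bilinear form on $V^1$ (with respect to the graph norm), because $\lambda \alpha + \beta$ is a uniformly positive isomorphism on $W^1$; Lax--Milgram therefore yields a unique $u \in V^1$. Defining $u^*$ by the formula above places it in $W^2$, and the first equation of the system then reads $d^* u^* = (\lambda \alpha + \beta) u - \alpha f \in W^1$, so that $u^* \in V_2^*$ and $(u,u^*) \in \dom(A)$ as required.

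Together with density of $\dom(A)$, the Lumer--Phillips theorem \cite{Pazy83} now shows that $A$ generates a $C_0$-semigroup of contractions $(e^{tA})_{t \ge 0}$ on $(H,\langle\cdot,\cdot\rangle_H)$. Standard semigroup theory then provides a unique mild solution $U(t) = e^{tA}(u_0,u_0^*) \in C([0,\infty);H)$ for arbitrary initial data, and a classical solution $U \in C^1([0,\infty);H) \cap C([0,\infty);\dom(A))$ whenever $(u_0,u_0^*) \in \dom(A) = V^1 \times V_2^*$, yielding both assertions. I expect the only subtle point to be the verification of the range condition above; once Lax--Milgram and the adjoint relation \eqref{eq:adjoint} are combined, however, this is routine.
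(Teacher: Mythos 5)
Your proof is correct, and it takes a genuinely different route to the generation property than the paper. The paper sets up the same product space with the energy inner product, the same operator $A$ with $\dom(A)=V^1\times V_2^*$, and the same dissipativity computation $\langle A(u,u^*),(u,u^*)\rangle_X=-\|u\|_\beta^2\le 0$; but instead of checking the range condition it computes the adjoint explicitly, $A^*(v,v^*)=(-\alpha^{-1}(\d^* v^*-\beta v),\,\gamma^{-1}\d v)$, notes that it is dissipative as well, and invokes the corollary of the Lumer--Phillips theorem for closed, densely defined dissipative operators with dissipative adjoint (Engel--Nagel, Cor.~3.17). You verify the range condition of Lumer--Phillips directly: eliminate $u^*$, solve the resulting coercive variational problem on $V^1$ (graph norm) by Lax--Milgram, and recover $u^*\in V_2^*$ from the defining property \eqref{eq:adjoint} of the adjoint. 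Both are legitimate; the paper's argument is shorter because the skew-symmetric-plus-damping structure makes $A^*$ available at no extra cost, while yours is more constructive and self-contained (no adjoint computation, no appeal to the corollary) and in fact foreshadows the stationary mixed problem \eqref{eq:aux01}--\eqref{eq:aux02} that the paper later solves by Brezzi-type arguments. One cosmetic slip: the constants in your displayed variational problem are inconsistent — with $u^*=\lambda^{-1}(f^*-\gamma^{-1}\d u)$ the weak form should read $\langle(\lambda\alpha+\beta)u,v\rangle_{W^1}+\lambda^{-1}\langle\gamma^{-1}\d u,\d v\rangle_{W^2}=\langle\alpha f,v\rangle_{W^1}+\lambda^{-1}\langle f^*,\d v\rangle_{W^2}$, i.e.\ without the extra factor $\lambda$ in front of the first term (or with the right-hand side rescaled accordingly); this does not affect boundedness, coercivity, or any subsequent step.
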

\begin{proof}
A similar proof can be found in \cite{Eller2019} for the special case of Maxwell's equations. We set $X=W^1 \times W^2$ and rewrite \eqref{eq:1}--\eqref{eq:2} compactly as 
\begin{align} \label{eq:evo}
\dt x &= A x \qquad \text{in } X, \ t \ge 0,
\end{align}
with operator $A : X \to X$ mapping $(u,u^*) \mapsto (\alpha^{-1} (\d^* u^* - \beta u), -\gamma^{-1} \d u)$. By the assumptions made in section \ref{subsec:assumptions}, the operator $A$ is closed and densely defined, with domain $\D(A)=V^1 \times V_2^*$. 
We equip the product space $X$ with the energy scalar product $\langle (u,u^*), (v,v^*)\rangle_X = \langle u,v\rangle_\alpha + \langle u^*,v^*\rangle_{\gamma}$.
Then 
\begin{align*}
\langle A (u,u^*), (u,u^*)\rangle_X 
&= -\|u\|_\beta^2 \le 0 \qquad \forall (u,u^*) \in \mathcal{D}(A),
\end{align*}
which shows that $A$ is dissipative. The corresponding adjoint operator $A^* : X \to X$ maps $(v,v^*) \mapsto (-\alpha^{-1}(\d^* v^* - \beta v), \gamma^{-1} \d v)$ and is dissipative as well. Hence by a corollary of the Lumer-Phillips theorem, $A$ generates a contraction semigroup on $X$ and the statements of the lemma follow immediately; see, e.g., \cite[Cor. 3.17]{Engel00}.
\end{proof}

\begin{remark} \label{rem:dense}
Let $A: \D(A) \subset X \to X$ be given as in the previous proof. Then any mild solution $u \in C([0,\infty);X)$ of \eqref{eq:evo} can be approximated in the norm of $C([0,\infty);X)$ by classical solutions $\tilde x \in C^1([0,\infty);X) \cap C([0,\infty);\D(A))$. This is a direct consequence of the density of $\D(A) \subset X$, which follows from that of $V^1 \subset W^1$ and $V_2^* \subset W^2$.
\end{remark}

The following weak characterization of classical solutions will be used for our analysis, but later also serves as the starting point for the design of discretization methods. 
\begin{lemma}
Let $(u,u^*)$ 
%\in C^1([0,\infty);W^1 \times W^2) \cap C([0,\infty);V^1 \times V_2^*)$ 
denote a classical solution of \eqref{eq:1}--\eqref{eq:2}. Then 
\begin{alignat}{2}
\langle\alpha\dt u(t), v\rangle_{W^1} - \langle u^*(t), \d v\rangle_{W^2} &= - \langle\beta u(t), v\rangle_{W^1}, \label{eq:var1} \\
\langle\gamma \dt u^*(t), v^*\rangle_{W^2} + \langle \d u(t), v^*\rangle_{W^2} &= 0, \label{eq:var2}
\end{alignat}
for all test functions $v \in V^1$, $v^* \in W^2$ and all $t \ge 0$.
Moreover, 
\begin{align} \label{eq:diss}
\ddt \frac{1}{2} \Big( \|u(t)\|_\alpha^2 &+ \|u^*(t)\|_\gamma^2 \Big) 
= -\|u(t)\|_\beta^2 \le 0. 
\end{align}
\end{lemma}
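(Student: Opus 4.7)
The plan is to derive both equations \eqref{eq:var1}--\eqref{eq:var2} by straightforward testing of the strong equations \eqref{eq:1}--\eqref{eq:2}, and then obtain the energy identity \eqref{eq:diss} by choosing the test functions to be the solution itself.

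First I would verify \eqref{eq:var1}--\eqref{eq:var2}. Since $(u,u^*)$ is a classical solution, both sides of \eqref{eq:1} live in $W^1$ for each $t\ge 0$, so I can pair them with an arbitrary $v \in V^1$ in the $W^1$-scalar product. The only nontrivial manipulation is the $\d^*u^*$-term: by the defining relation \eqref{eq:adjoint} of the adjoint, and using $u^*(t) \in V_2^*$ (which holds because $(u,u^*)\in C([0,\infty);V^1\times V_2^*)$ for a classical solution), one has $\langle \d^* u^*(t),v\rangle_{W^1}=\langle u^*(t),\d v\rangle_{W^2}$, which yields \eqref{eq:var1}. For \eqref{eq:var2}, equation \eqref{eq:2} is already an identity in $W^2$, so one simply pairs it with an arbitrary $v^*\in W^2$ in the $W^2$-scalar product. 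No adjoint is needed there.

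Next I would establish the dissipation identity. Since $(u,u^*)\in C^1([0,\infty);W^1\times W^2)$ and $\alpha,\gamma$ are time-independent, selfadjoint, and positive, the chain rule gives
\begin{align*}
\ddt \tfrac{1}{2}\|u(t)\|_\alpha^2 = \langle \alpha \dt u(t), u(t)\rangle_{W^1},
\qquad
\ddt \tfrac{1}{2}\|u^*(t)\|_\gamma^2 = \langle \gamma \dt u^*(t), u^*(t)\rangle_{W^2}.
\end{align*}
I would then test \eqref{eq:var1} with $v=u(t)\in V^1$ and \eqref{eq:var2} with $v^*=u^*(t)\in W^2$, and add the two resulting equations. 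The cross terms $-\langle u^*(t),\d u(t)\rangle_{W^2}$ and $+\langle \d u(t),u^*(t)\rangle_{W^2}$ cancel by symmetry of the $W^2$-scalar product, leaving exactly $-\langle \beta u(t),u(t)\rangle_{W^1}=-\|u(t)\|_\beta^2$, which is $\le 0$ because $\beta$ is positive.

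There is no real obstacle: the only subtlety is making sure that $u(t)\in V^1$ and $u^*(t)\in V_2^*$ are admissible test functions, which is exactly guaranteed by the classical solution regularity stated in the previous lemma. For the variational identities, the only place where one must be careful is the adjoint pairing, but this is again justified by $u^*(t)\in V_2^*=\dom(\d^*)$ for all $t$.
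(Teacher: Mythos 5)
Your proof is correct and follows essentially the same route as the paper: test \eqref{eq:1}--\eqref{eq:2} with $v$ and $v^*$, use the adjoint relation \eqref{eq:adjoint} (justified since $u^*(t)\in V_2^*$ for a classical solution), then take $v=u(t)$, $v^*=u^*(t)$ and let the cross terms cancel to get \eqref{eq:diss}. Your explicit attention to the admissibility of the test functions is a welcome, slightly more careful version of the paper's argument, but nothing differs in substance.
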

\begin{proof}
The variational identities follow immediately from testing the equations with $v$ and $v^*$, respectively, and using $\langle \d^* u^*(t), v\rangle_{W^1} = \langle u^*(t), \d v \rangle_{W^2}$, which follows from the definition of the adjoint operator. By formal differentiation, we obtain
\begin{align*}
\ddt \frac{1}{2} \Big( \|u(t)\|_\alpha^2 &+ \|u^*(t)\|_\gamma^2 \Big)  
 = \langle\alpha\dt u(t), u(t)\rangle_{W^1} + \langle\gamma\dt u^*(t), u^*(t)\rangle_{W^2} \\
&= -\langle\beta u(t),u(t)\rangle_{W^1} + \langle u^*(t), \d u(t)\rangle_{W^2} 
- \langle \d u(t), u^*(t)\rangle_{W^2},
\end{align*}
where we employed \eqref{eq:var1}--\eqref{eq:var2} with $v=u(t)$ and $v^* = u^*(t)$, respectively. The last two terms cancel each other, which already yields the power balance \eqref{eq:diss}. 
\end{proof}

By integration of \eqref{eq:diss} in time, we see that $\|u(t)\|_\alpha^2 + \|u^*(t)\|_\gamma^2 \le \|u(s)\|_\alpha^2 + \|u^*(s)\|_\gamma^2$ for all $0 \le s \le t$, i.e., the energy of the system is non-increasing, and by Remark~\ref{rem:dense}, this estimate carries over to mild solutions. 
Further assumptions and arguments are needed, however, to prove the exponential decay of the energy.

\section{Proof of the main result}
\label{sec:proof}

In this section, we will establish the exponential decay estimate \eqref{eq:decay} for an arbitrary classical solution $(u,u^*)$ of \eqref{eq:1}--\eqref{eq:2} satisfying the compatibility condition $\gamma u^*(0) \in \R(\d)$. The assertion of Theorem~\ref{thm:main} then follows by Remark~\ref{rem:dense}. 

\subsection{Auxiliary functions}
To simplify the energy estimates derived in the following, we 
define a primitive $(w,w^*)$ of the solution $(u,u^*)$ by integration in time, i.e. 
\begin{align} \label{eq:aux}
w(t) = w_0 + \int_0^t u(s) \, ds, \qquad w^*(t) = w^*_0 + \int_0^t u^*(s) \, ds.
\end{align}
The initial values $w_0$ and $w^*_0$ are chosen as a solution of 
\begin{align}
\beta w_0 - \d^* w^*_0 &= -\alpha u(0) \label{eq:aux01}\\
\d w_0 &= -\gamma u^*(0).  \label{eq:aux02}
\end{align}
By elementary arguments, one can verify the following assertions.
\begin{lemma} \label{lem:aux}
Let $(u,u^*)$ be a classical solution of \eqref{eq:1}--\eqref{eq:2} with $\gamma u^*(0) \in \R(\d)$. Then the system \eqref{eq:aux01}--\eqref{eq:aux02} has 
%at least one solution $(w_0,w^*_0) \in V^1 \times V_2^*$.
a unique solution $(w_0,w^*_0) \in V^1 \times (V_2^* \cap N(\d^*)^\perp)$.
Moreover, the function $(w,w^*)$ in \eqref{eq:aux} lies in $C^2([0,\infty);W^1 \times W^2) \cap C^1([0,\infty);V^1 \times V_2^*)$ and
\begin{alignat}{2}
\alpha \dt w - \d^* w^* &= -\beta w \qquad &&\text{in } W^1, \ t \ge 0, \label{eq:aux1}\\
\gamma \dt w^* + \d w &= 0 \qquad &&\text{in } W^2, \ t \ge 0. \label{eq:aux2}
\end{alignat}
Furthermore 
\begin{alignat}{2}
\alpha \dtt w - \d^* \dt w^* &= -\beta \dt w \qquad &&\text{in } W^1, \ t \ge 0, \label{eq:aux11}\\
\gamma \dtt w^* + \d \dt w &= 0 \qquad &&\text{in } W^2, \ t \ge 0. \label{eq:aux12}
\end{alignat}
\end{lemma}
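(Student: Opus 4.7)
The plan is to dispatch the three claims in sequence: unique solvability of the stationary system \eqref{eq:aux01}--\eqref{eq:aux02}, the stated regularity of the primitive $(w,w^*)$, and finally the two evolution systems.

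For the first claim I would decompose the problem. The compatibility $\gamma u^*(0) \in \R(\d)$ together with the closed-range property of $\d$ (built into the Hilbert-complex framework via the Poincar\'e inequality \eqref{eq:poincare}) yields a unique $w_0^\perp \in V^1 \cap N(\d)^{\perp_\beta}$ with $\d w_0^\perp = -\gamma u^*(0)$. I would then seek $w_0$ in the form $w_0 = w_0^N + w_0^\perp$ with $w_0^N \in N(\d)$, so that \eqref{eq:aux02} is automatic. The remaining freedom in $w_0^N$ must be spent to make the right-hand side $\beta w_0 + \alpha u(0)$ of \eqref{eq:aux01} lie in $\R(\d^*) = N(\d)^\perp$, which is the solvability condition for the first equation. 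Testing against $z \in N(\d)$ and using that $w_0^\perp$ is $\beta$-orthogonal to $N(\d)$, this collapses to the single requirement $\langle w_0^N, z\rangle_\beta = -\langle \alpha u(0), z\rangle_{W^1}$ for all $z \in N(\d)$, which by Riesz representation in the Hilbert space $(N(\d),\langle\cdot,\cdot\rangle_\beta)$ has a unique solution $w_0^N \in N(\d)$. Once $w_0$ is fixed, the restriction of $\d^*$ to $V_2^* \cap N(\d^*)^\perp$ is a bijection onto its (closed) range, hence \eqref{eq:aux01} determines $w_0^*$ in that subspace uniquely.

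For the regularity part, the fundamental theorem of calculus, combined with the classical-solution regularity $(u,u^*) \in C^1([0,\infty);W^1\times W^2) \cap C([0,\infty);V^1\times V_2^*)$ and the fact that $w_0 \in V^1$, $w_0^* \in V_2^*$, immediately yields $\dt w = u$ and $\dt w^* = u^*$ with the claimed $C^1$ and $C^2$ regularities; here I use that $V^1$ and $V_2^*$ are closed under Bochner integration of continuous integrands, which is a consequence of the closedness of $\d$ and $\d^*$.

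To obtain \eqref{eq:aux1}--\eqref{eq:aux2} I would integrate \eqref{eq:1}--\eqref{eq:2} from $0$ to $t$, pull the closed operators $\d$ and $\d^*$ inside the integral (justified by continuity into the respective graph-norm spaces), and then use the very definition \eqref{eq:aux01}--\eqref{eq:aux02} of the initial data to cancel the evaluations at $t=0$. The second pair \eqref{eq:aux11}--\eqref{eq:aux12} then follows by differentiating \eqref{eq:aux1}--\eqref{eq:aux2} in time, which is legitimate by the regularity just established. I expect the existence step for $(w_0,w_0^*)$ to be the main obstacle: the decomposition and the identification $\R(\d^*) = N(\d)^\perp$ lean on the closed-range structure of the complex, and the selection of $w_0^N$ must be engineered so that \eqref{eq:aux01} and \eqref{eq:aux02} become compatible simultaneously.
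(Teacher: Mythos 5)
Your proposal is correct, and it differs from the paper only in the first (and only nontrivial) step, the solvability of \eqref{eq:aux01}--\eqref{eq:aux02}. The paper dispatches this in one line by citing a generalization of Brezzi's theorem for saddle-point problems with non-surjective constraint operator (Boffi--Brezzi--Fortin, Thm.~4.2.4), which is also what is reused verbatim in the discrete setting of Section~\ref{sec:disc}. You instead give a self-contained construction: split $V^1 = N(\d) \oplus \bigl(V^1 \cap N(\d)^{\perp_\beta}\bigr)$, solve $\d w_0^\perp = -\gamma u^*(0)$ using the compatibility condition and the bijectivity of $\d$ from $V^1\cap N(\d)^{\perp_\beta}$ onto its closed range, fix the kernel part $w_0^N$ by Riesz representation on $\bigl(N(\d),\langle\cdot,\cdot\rangle_\beta\bigr)$ so that $\beta w_0 + \alpha u(0) \in N(\d)^\perp = \R(\d^*)$, and then invert $\d^*$ on $V_2^*\cap N(\d^*)^\perp$. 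This is sound, provided you note (as you do) that the closed-range facts and the identification $\R(\d^*)=N(\d)^\perp$ follow from the Poincar\'e inequality \eqref{eq:poincare} together with the closed range theorem; for the uniqueness claim you should also remark that \emph{any} solution $w_0\in V^1$ admits the $\beta$-orthogonal splitting and that testing \eqref{eq:aux01} with $z\in N(\d)$ kills the $\d^*w_0^*$ term, so both pieces are forced --- but this is immediate from what you wrote. Your route is longer but makes explicit exactly where the compatibility condition and the Hilbert-complex structure enter, and it produces the normalization $w_0^*\in N(\d^*)^\perp$ constructively; the paper's route is shorter and transfers without change to the Galerkin case. The remaining parts of your argument (regularity via Bochner integration and closedness of $\d,\d^*$, integration of \eqref{eq:1}--\eqref{eq:2} with cancellation of the bracket through \eqref{eq:aux01}--\eqref{eq:aux02}, then differentiation to get \eqref{eq:aux11}--\eqref{eq:aux12}) are in substance identical to the paper's proof.
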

\begin{proof}
Solvability of \eqref{eq:aux01}--\eqref{eq:aux02} follows from a generalization of Brezzi's lemma; see \cite[Thm.~4.2.4]{Boffi}. 
%
% Note that $w_0^*$ here is only unique up to elements in $N(\d^*)$.
%
The regularity of $(w,w^*)$, on the other hand, follows immediately from that of $(u,u^*)$ and the definition of $(w,w^*)$.
Using \eqref{eq:aux} and \eqref{eq:1}, we further see that 
\begin{align*}
\alpha \dt w(t) 
&= \alpha u(t)  
 = \alpha \big(u(0) + \int_0^t \dt u(s) \, ds\big)  
 = \int_0^t \bigl(\d^* u^*(s) - \beta u(s)\bigr) \, ds + \alpha u(0)\\
&= \d^* w^* (t) - \beta w(t) + \big[\alpha u(0) - \d^* w^*(0) + \beta w(0)\big].  
\end{align*}
By equation~\eqref{eq:aux01}, the term in brackets vanishes, and we obtain  \eqref{eq:aux1}. The second equation \eqref{eq:aux2} follows similarly, and the remaining identities follow from differentiation of the previous ones. Due to the regularity of the functions, all steps are justified.
\end{proof}

\begin{remark}
Note that $u=\dt w$ and $u^*=\dt w^*$ by definition of $(w,w^*)$ in \eqref{eq:aux}. Hence the equations \eqref{eq:aux11}--\eqref{eq:aux12} are in fact equivalent to the original system \eqref{eq:1}--\eqref{eq:2}. 
\end{remark}

\subsection{Intermediate results}
With similar arguments as before, one can see that the functions $(w,w^*)$ defined in \eqref{eq:aux} satisfy the variational identities
\begin{alignat}{2}
\langle\alpha\dt w(t), v\rangle_{W^1} - \langle w^*(t), \d v\rangle_{W^2} &= - \langle\beta w(t), v\rangle_{W^1} \label{eq:var1w} \\
\langle\gamma\dt w^*(t), v^*\rangle_{W^2} + \langle \d w(t), v^*\rangle_{W^2} &= 0, \label{eq:var2w}
\end{alignat}
for all $v \in V^1$, $v^* \in V^*_2$, and all $t \ge 0$. Moreover, 
\begin{alignat}{2}
\langle\alpha\dtt w(t), v\rangle_{W^1} - \langle \dt w^*(t), \d v\rangle_{W^2} &= - \langle\beta \dt w(t), v\rangle_{W^1} \label{eq:var1w1} \\
\langle\gamma\dtt w^*(t), v^*\rangle_{W^2} + \langle \d \dt w(t), v^*\rangle_{W^2} &= 0, \label{eq:var2w1}
\end{alignat}
for all $v \in V^1$, $v^* \in V^*_2$, and all $t \ge 0$. As a direct consequence of the latter, we obtain
\begin{equation}\label{eq:dissw}
\ddt \frac{1}{2} \left( \|\dt w(t)\|_\alpha^2 + \|\dt w^*(t)\|_\gamma^2 \right)  
= -\|\dt w(t)\|_\beta^2 \le 0,
\end{equation}
which in fact is equivalent to the power balance~\eqref{eq:diss}.  
As noted before, this type of energy estimate is not sufficient, however, to prove exponential decay of the system. 

\subsection{Improved energy estimate}
For our analysis, we will use the modified energy
\begin{align} \label{eq:mod}
    \E_\delta(t) := \frac{1}{2} \left( \|\dt w(t)\|^2_\alpha + \|\dt w^*(t)\|_\gamma^2\right) + \delta \langle  \dt w(t),w(t)\rangle_\alpha,
\end{align}
where $\delta>0$ is a parameter to be chosen later on. The extra term provides a coupling between \eqref{eq:var1w}--\eqref{eq:var2w} and the differentiated system \eqref{eq:var1w1}--\eqref{eq:var2w1}, which will be essential to establish the exponential decay of the energy.
As a first step, we show that $\E_\delta(t)$ is equivalent to $\E_0(t)$ which is the natural energy arising in the analysis of our problem.
\begin{lemma} \label{lem:equiv}
%Stefan:
For any $0 < \delta \le \delta^*:=\frac12\frac{\cb}{2+\CP\cb}$, we have 
\begin{align*}
\frac{1}{2} \E_0(t) \le \E_\delta(t) \le \frac{3}{2} \E_0(t) \qquad \forall t \ge 0.
\end{align*}
\end{lemma}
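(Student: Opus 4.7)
The plan is to show that the discrepancy $\E_\delta(t) - \E_0(t) = \delta \langle \dt w(t), w(t)\rangle_\alpha$ is bounded in absolute value by $\tfrac12\E_0(t)$; the two inequalities of the lemma then follow immediately. By Cauchy--Schwarz, this reduces to an estimate of the form $\|w(t)\|_\alpha \lesssim \|\dt w(t)\|_\alpha + \|\dt w^*(t)\|_\gamma$, after which a single application of Young's inequality and a sharp choice of $\delta$ pin down the stated threshold $\delta^*$.

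The main work is therefore in bounding $\|w(t)\|_\alpha$. I would split $w(t) = w^\parallel(t) + w^\perp(t)$ via the $\beta$-orthogonal decomposition with $w^\parallel \in N(\d)$ and $w^\perp \in N(\d)^{\perp_\beta}$; since $N(\d)$ is closed in $W^1$ (as $\d$ is closed), this projection is well-defined on $W^1$, and both summands lie in $V^1$. The orthogonal part is easy: equation \eqref{eq:aux2} rewrites as $\d w = -\gamma \dt w^*$, so $\|\d w^\perp\|_{\gamma^{-1}} = \|\d w\|_{\gamma^{-1}} = \|\dt w^*\|_\gamma$, and the Poincar\'e inequality \eqref{eq:poincare} delivers $\|w^\perp\|_\alpha \le \CP \|\dt w^*\|_\gamma$.

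The kernel component $w^\parallel$ requires the other auxiliary equation. Testing \eqref{eq:aux1}, i.e.\ $\alpha \dt w - \d^* w^* = -\beta w$, against an arbitrary $z \in N(\d) \subset V^1$ makes the $\d^* w^*$ term vanish via the adjoint relation \eqref{eq:adjoint}, leaving the identity $\langle \dt w, z\rangle_\alpha = -\langle w, z\rangle_\beta$ for every $z \in N(\d)$. Choosing $z = w^\parallel$ and using $\langle w, w^\parallel\rangle_\beta = \|w^\parallel\|_\beta^2$ yields $\|w^\parallel\|_\beta^2 \le \|\dt w\|_\alpha \|w^\parallel\|_\alpha$, after which the norm equivalence \eqref{eq:norm_equivalence} gives $\|w^\parallel\|_\alpha \le \cb^{-1} \|\dt w\|_\alpha$.

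Assembling the two pieces by the triangle inequality and splitting the mixed term $\|\dt w\|_\alpha \|\dt w^*\|_\gamma$ via Young's inequality, one obtains $\delta|\langle \dt w, w\rangle_\alpha| \le \delta\bigl(\tfrac{1}{\cb} + \tfrac{\CP}{2}\bigr)\|\dt w\|_\alpha^2 + \delta\tfrac{\CP}{2}\|\dt w^*\|_\gamma^2$. Demanding that each prefactor be at most $\tfrac14$ (so that the right-hand side is bounded by $\tfrac12\E_0$) yields exactly the threshold $\delta^* = \tfrac12 \cdot \tfrac{\cb}{2+\CP\cb}$. The step I expect to be the main obstacle is the estimate on $w^\parallel$: the Poincar\'e inequality is useless on $N(\d)$, so the bound must be squeezed out of the evolution equation itself, and it is crucially the specific choice of initial data \eqref{eq:aux01} — encoded in Lemma~\ref{lem:aux} — that converts the differentiated system into the undifferentiated identity \eqref{eq:aux1} needed to test against kernel elements.
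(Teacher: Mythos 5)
Your proposal is correct and follows essentially the same route as the paper: the $\beta$-orthogonal splitting of $w(t)$ into its $N(\d)$ and $N(\d)^{\perp_\beta}$ parts, the Poincar\'e inequality combined with \eqref{eq:aux2} for the orthogonal part, testing \eqref{eq:aux1} against the kernel component for the other part, and Young's inequality with the prefactor bound $\tfrac14$ leading to the same threshold $\delta^*$.
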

\begin{proof}
We start by deriving an estimate for $\|w(t)\|_\alpha$. 
Since $\d : V^1 \subset W^1 \to W^2$ is a closed linear operator, we have $N(\d)=\overline{N(\d)}$. As a consequence, we may split 
\begin{align} \label{eq:split}
w(t) = w_0(t) + w_1(t)
\end{align} 
with $w_0(t) \in N(\d)$ and $w_1(t) \in N(\d)^{\perp_\beta}$. 
From the Poincar\'e inequality~\eqref{eq:poincare}, the orthogonal splitting \eqref{eq:split}, and equation~\eqref{eq:aux2}, we immediately deduce that 
\begin{align*}
\|w_1(t)\|_{\alpha} 
\le \CP \|\d w_1(t)\|_{\gamma^{-1}} 
= \CP \|\d w(t)\|_{\gamma^{-1}} 
= \CP \|\dt w^*(t)\|_{\gamma}.
\end{align*}
In order to estimate the second component $w_0(t) \in N(\d)$ in \eqref{eq:split}, we use the orthogonality of the splitting and equation~\eqref{eq:var1w} to see that
\begin{align*}
\|w_0(t)\|_\beta^2 
&= \langle\beta w_0(t), w_0(t)\rangle_{W^1} 
  = \langle\beta w(t),w_0(t)\rangle_{W^1}\\
&= -\langle\alpha\dt w(t), w_0(t)\rangle_{W^1} + \langle w^*(t), \d w_0(t)\rangle_{W^2}\,.
\end{align*}
Since $w_0(t) \in N(\d)$, we have $\d w_0(t)=0$ and the second term drops out. 
Using the Cauchy-Schwarz inequality and the norm equivalence~\eqref{eq:norm_equivalence}, this yields
\begin{align*}
\cb \|w_0(t)\|_\alpha^2 \le \|w_0(t)\|_\beta^2 
 &\le \|\dt w(t)\|_\alpha \|w_0(t)\|_\alpha,
\end{align*}
and hence $\|w_0(t)\|_\alpha \le \frac{1}{\cb} \|\dt w(t)\|_\alpha$. 
In summary, we thus have shown that 
\begin{align} \label{eq:est}
\|w(t)\|_\alpha \le \frac{1}{\cb} \|\dt w(t)\|_\alpha + \CP \|\dt w^*(t)\|_\gamma. 
\end{align}
By elementary computations and Young's inequality, we then obtain
\begin{align*}
\delta |\langle\dt w(t), w(t) \rangle_\alpha| 
&\le \delta \|\dt w(t)\|_\alpha \|w(t)\|_\alpha \\
&\le \delta \Big(\frac{1}{\cb} + \frac{\CP}{2}\Big) \|\dt w(t)\|_\alpha^2 + \delta \frac{\CP}{2} \|\dt w^*(t)\|^2_\gamma \, . 
\end{align*}
For any $0 \le \delta \le \frac{1}{2}\frac{\cb}{2+\CP\cb}$, both leading factors can be estimated by $\tfrac{1}{4}$, and the last line can thus be bounded by $\frac{1}{2} \E_0(t)$, which already yields the assertion of the lemma.
\end{proof}

As a next step, we now show that the modified energy decays exponentially. 
\begin{lemma} \label{lem:main}
For any 
% $0 \le \delta \le \delta^{**} = \min(\frac{\cb^2}{2+\CP\cb}, \frac{\cb}{2\cb+2\Cb +\CP^2 \Cb^2 \cb})$ \\
% stefan:
$0 \le \delta \le \delta^{**} = \min(\frac{1}{2}\frac{\cb}{2+\CP\cb}, \frac{\cb}{2+2\Cb/\cb +(\CP\Cb)^2})$, 
there holds 
\begin{align}
\ddt \E_\delta(t) \le -\frac{2\delta}{3}  \E_\delta(t), \qquad t \ge 0. 
\end{align}
\end{lemma}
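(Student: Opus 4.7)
The plan is to differentiate $\E_\delta(t)$ explicitly, use the second-order variational identities \eqref{eq:var1w1}--\eqref{eq:var2w1} together with~\eqref{eq:aux2}, and then absorb the mixed term produced by $\delta\langle\dt w,w\rangle_\alpha$ by combining Cauchy--Schwarz and Young's inequalities with the estimate~\eqref{eq:est} already derived in the proof of Lemma~\ref{lem:equiv} and the norm equivalence~\eqref{eq:norm_equivalence}.

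First I would compute, using the regularity from Lemma~\ref{lem:aux},
\[
\ddt \E_\delta(t) = \langle\alpha\dtt w,\dt w\rangle_{W^1} + \langle\gamma\dtt w^*,\dt w^*\rangle_{W^2} + \delta\langle\alpha\dtt w, w\rangle_{W^1} + \delta\|\dt w\|_\alpha^2.
\]
The first two terms collapse to $-\|\dt w\|_\beta^2$ by the power balance~\eqref{eq:dissw}. For the coupling term, the variational identity~\eqref{eq:var1w1} with test function $v=w(t)\in V^1$ gives $\langle\alpha\dtt w,w\rangle_{W^1} = \langle\dt w^*,\d w\rangle_{W^2} - \langle\beta\dt w,w\rangle_{W^1}$, and \eqref{eq:aux2} replaces $\d w$ by $-\gamma\dt w^*$, producing the critical contribution $-\delta\|\dt w^*\|_\gamma^2$ that injects dissipation into the second component of the energy. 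Altogether,
\[
\ddt \E_\delta(t) = -\|\dt w\|_\beta^2 + \delta\|\dt w\|_\alpha^2 - \delta\|\dt w^*\|_\gamma^2 - \delta\langle\beta\dt w, w\rangle_{W^1}.
\]

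My target would then be the stronger pointwise inequality $\ddt\E_\delta(t) \le -\delta\E_0(t)$, which by the upper bound $\E_\delta\le\frac{3}{2}\E_0$ from Lemma~\ref{lem:equiv} implies the claimed $\ddt\E_\delta \le -\frac{2\delta}{3}\E_\delta$. To obtain it, I would bound the cross term by Cauchy--Schwarz and~\eqref{eq:norm_equivalence} in the form $|\langle\beta\dt w,w\rangle_{W^1}| \le \Cb\|\dt w\|_\alpha\|w\|_\alpha$; then insert the splitting $\|w\|_\alpha \le \frac{1}{\cb}\|\dt w\|_\alpha + \CP\|\dt w^*\|_\gamma$ furnished by \eqref{eq:est}; and finally tame the mixed product $\|\dt w\|_\alpha\|\dt w^*\|_\gamma$ via Young's inequality with the free parameter chosen so that its $\|\dt w^*\|_\gamma^2$ part is exactly absorbed by $-\delta\|\dt w^*\|_\gamma^2$. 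The negative reservoir $-\|\dt w\|_\beta^2 \le -\cb\|\dt w\|_\alpha^2$, supplied by \eqref{eq:norm_equivalence}, is then used to soak up the residual $\|\dt w\|_\alpha^2$ contributions.

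The only real obstacle is algebraic bookkeeping: the Young parameter and the upper bound on $\delta$ must be chosen so that every remaining $\|\dt w\|_\alpha^2$ and $\|\dt w^*\|_\gamma^2$ coefficient is controlled by $-\frac{\delta}{2}$, giving exactly $\ddt\E_\delta\le -\delta\E_0$. The explicit threshold $\delta\le\delta^{**}$ stated in the lemma is precisely what makes this closing condition hold, and a final appeal to Lemma~\ref{lem:equiv} promotes the bound to $\ddt\E_\delta\le -\frac{2\delta}{3}\E_\delta$.
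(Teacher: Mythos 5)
Your proposal is correct and follows essentially the same route as the paper: differentiate $\E_\delta$, reduce the first two terms via the power balance \eqref{eq:dissw}, handle the coupling term with \eqref{eq:var1w1} tested by $w(t)$ and the substitution $\d w=-\gamma\dt w^*$ (the paper uses \eqref{eq:var2w}, which is the same thing), then control $-\delta\langle\beta\dt w,w\rangle_{W^1}$ by Cauchy--Schwarz, \eqref{eq:norm_equivalence}, \eqref{eq:est} and Young, and conclude $\ddt\E_\delta\le-\delta\E_0\le-\frac{2\delta}{3}\E_\delta$ via Lemma~\ref{lem:equiv}. The bookkeeping you defer does close under the stated $\delta^{**}$ (your condition only needs $\delta(\tfrac32+\Cb/\cb+\tfrac12(\CP\Cb)^2)\le\cb$, which is implied by, in fact slightly weaker than, the paper's threshold), so there is no gap.
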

\begin{proof}
From the definition of $\E_\delta(t)$ and \eqref{eq:dissw}, we get
\begin{align*}
\ddt \E_\delta(t) 
&= \ddt \frac{1}{2} \big(\|\dt w(t)\|_\alpha^2 + \|\dt w^*(t)\|_\gamma^2 \big)  + \ddt \delta \langle\dt w(t), w(t)\rangle_\alpha \\
&= -\|\dt w(t)\|_\beta^2 + \delta \|\dt w(t)\|^2_\alpha + \delta \langle\alpha\dtt w(t), w(t)\rangle_{W^1} \\
& \le -\left(\cb - \delta\right) \|\dt w(t)\|^2_\alpha + \delta \langle\alpha\dtt w(t), w(t)\rangle_{W^1}\,.
\end{align*}
Using \eqref{eq:var1w1} and \eqref{eq:var2w}, we can see that 
\begin{align*}
\langle\alpha\dtt w(t), w(t)\rangle_{W^1}    
&= -\langle\beta\dt w(t), w(t)\rangle_{W^1} + \langle \dt w^*(t), \d w(t)\rangle_{W^2} \\
&= -\langle\beta\dt w(t), w(t)\rangle_{W^1} - \|\dt w^*(t)\|^2_\gamma\,.
\end{align*}
With~\eqref{eq:est} and Young's inequality, the first term can be further bounded by
\begin{align*}
-\langle\beta\dt w(t), w(t)\rangle_{W^1}
&\le C_\beta\|\dt w(t)\|_\alpha \|w(t)\|_\alpha \\
&\le \Big(\frac{C_\beta}{\cb} + \frac{(\CP \Cb)^2}{2}\Big) \|\dt w(t)\|_\alpha^2 + \frac{1}{2} \|\dt w^*(t)\|_\gamma^2\, .
\end{align*}
Now let $C_1$ denote the coefficient in front of the first term. Then together with the previous estimates, we immediately obtain
\begin{align*}
\ddt \E_\delta(t) 
\le -\big(\cb - \delta(1+C_1)\big) \|\dt w(t)\|^2_\alpha - \frac{\delta}{2}  \|\dt w^*(t)\|^2_\gamma\,.
\end{align*}
For any 
% $0 \le \delta \le \delta^\prime=\frac{\cb^2}{2\cb+2\Cb +\CP^2 \Cb^2 \cb}%
% Stefan:
$0 \le \delta \le\frac{\cb}{2+2\Cb/\cb +(\CP\Cb)^2}$, we further see that $\delta (1+C_1) \le \frac{\cb}{2}$, and consequently 
\begin{align} \label{eq:exp}
\ddt \E_\delta(t)
&\le -\min(\cb,\delta) \E_0(t) 
\le - \min(\cb,\delta) \frac{2}{3} \E_\delta(t)\,.
\end{align}
In the last step, we used the right estimate of Lemma~\ref{lem:equiv}, and thus the condition $\delta \le \delta^*$, which particularly implies $\delta \le\cb$.
This yields the assertion of the lemma.
\end{proof}

\subsection*{Proof of Theorem~\ref{thm:main}}

From ~\eqref{eq:exp}, Grönwall's inequality \cite{Wloka}, and Lemma~\ref{lem:equiv}, we obtain for any $0\le s\le t$ the estimate
\begin{align}
\frac{1}{2} \E_0(t) \le \E_\delta(t) \le e^{-\frac{2\delta}{3} (t-s)} \E_\delta(s) \le \frac{3}{2} e^{-\frac{2\delta}{3} (t-s)} \E_0(s)\,,
\end{align}
 From the definition of $\E_0(t)$ and $(w,w^*)$, we see that $\E_0(t) = \frac{1}{2} (\|u(t)\|_\alpha^2 + \|u^*(t)\|_\gamma^2)$. We then choose $\delta$ as large as possible and obtain \eqref{eq:decay} with $C'=3$ and $c'=\frac{2}{3}\delta^{**}$, which was defined in the previous lemma. 
This proves the assertion of Theorem~\ref{thm:main} for classical solutions. 
By Remark~\ref{rem:dense}, the estimate remains valid for mild solutions as well.
\qed

\section{Compatible discretization}
\label{sec:disc}

We will show in the following that exponential stability can be preserved for numerical approximations obtained by appropriate discretization strategies. Most of the arguments used on the continuous level carry over verbatim, and we therefore only sketch the main additional assumptions and differences required for the analysis.

\subsection{Discretization in space}
We utilize a conforming Galerkin approximation of the weak form \eqref{eq:var1}--\eqref{eq:var2} of our problem. 
Let $V_h^1 \subset V^1$ and $W_h^2 \subset W^2$ be finite dimensional and  consider discrete solutions $(u_h,u_h^*) \in C^1([0,\infty);V_h^1 \times W_h^2)$ of 
\begin{alignat}{2}
\langle\alpha\dt u_h(t), v_h\rangle_{W^1} - \langle u_h^*(t), \d v_h\rangle_{W^2} &= - \langle\beta u_h(t), v_h\rangle_{W^1}, \label{eq:var1h} \\
\langle\gamma \dt u_h^*(t), v_h^*\rangle_{W^2} + \langle \d u_h(t), v_h^*\rangle_{W^2} &= 0, \label{eq:var2h}
\end{alignat}
for all test functions $v_h \in V_h^1$, $v^*_h \in W_h^2$, and all $t \ge 0$.
From the properties of $\alpha$ and $\gamma$, this can be seen to make up a regular system of linear ordinary differential equations. Well-posedness can thus be deduced from the Picard-Lindelöf theorem. 
\begin{lemma} \label{lem:41}
For any choice of initial values $u_h(0) \in V_h^1$ and $u_h^*(0) \in W_h^2$, the linear system \eqref{eq:var1h}--\eqref{eq:var2h} has a unique solution $(u_h,u_h^*)\in C^1([0,\infty);V_h^1 \times W_h^2)$. Moreover
\begin{align} \label{eq:dissh}
\ddt \frac{1}{2} \Big( \|u_h(t)\|_\alpha^2 &+ \|u_h^*(t)\|_\gamma^2 \Big) 
= -\|u_h(t)\|_\beta^2 \le 0, \qquad t \ge 0.
\end{align}
\end{lemma}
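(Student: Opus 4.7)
The plan is to mimic the continuous argument from Section~\ref{sec:prelim}, replacing semigroup theory by elementary finite-dimensional ODE theory. First I would fix (arbitrary) bases $\{\phi_i\}_{i=1}^{N_1}$ of $V_h^1$ and $\{\psi_j\}_{j=1}^{N_2}$ of $W_h^2$ and expand $u_h(t) = \sum_i \mathsf{u}_i(t)\phi_i$, $u_h^*(t) = \sum_j \mathsf{u}^*_j(t)\psi_j$. Inserting this into \eqref{eq:var1h}--\eqref{eq:var2h} and running $v_h,v_h^*$ over the bases turns the system into a linear ODE of the block form
\begin{equation*}
\begin{pmatrix} \mathsf{M}_\alpha & 0 \\ 0 & \mathsf{M}_\gamma \end{pmatrix}
\frac{\mathrm{d}}{\mathrm{d}t}\begin{pmatrix}\mathsf{u}\\ \mathsf{u}^*\end{pmatrix}
=\begin{pmatrix} -\mathsf{M}_\beta & \mathsf{D}^\top \\ -\mathsf{D} & 0 \end{pmatrix}\begin{pmatrix}\mathsf{u}\\ \mathsf{u}^*\end{pmatrix},
\end{equation*}
where $\mathsf{M}_\alpha,\mathsf{M}_\beta,\mathsf{M}_\gamma$ are the Gram matrices generated by $\alpha,\beta,\gamma$, respectively, and $\mathsf{D}_{ji} = \langle \d\phi_i,\psi_j\rangle_{W^2}$.

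The key structural observation is that $\mathsf{M}_\alpha$ and $\mathsf{M}_\gamma$ are symmetric positive definite, since $\alpha$ and $\gamma$ are selfadjoint positive isomorphisms and the bases are linearly independent. Hence the block mass matrix is invertible, the ODE can be rewritten as $\dot{\mathsf{x}} = \mathsf{A}\mathsf{x}$ with a constant matrix $\mathsf{A}$, and the Picard-Lindelöf theorem (equivalently, the matrix exponential) yields a unique global solution $(\mathsf{u},\mathsf{u}^*)\in C^1([0,\infty);\mathbb{R}^{N_1+N_2})$, which translates back to a unique $(u_h,u_h^*) \in C^1([0,\infty);V_h^1\times W_h^2)$ matching the prescribed initial data. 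This settles the well-posedness part.

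For the energy identity I would proceed exactly as in the proof of \eqref{eq:diss}. Since $u_h(t)\in V_h^1$ and $u_h^*(t)\in W_h^2$ are admissible test functions, one may substitute $v_h=u_h(t)$ in \eqref{eq:var1h} and $v_h^* = u_h^*(t)$ in \eqref{eq:var2h} and add the two resulting identities. The cross terms $-\langle u_h^*(t),\d u_h(t)\rangle_{W^2}$ and $+\langle \d u_h(t), u_h^*(t)\rangle_{W^2}$ cancel by the symmetry of the $W^2$-scalar product, and the $C^1$-regularity justifies the identifications $\langle \alpha\dt u_h, u_h\rangle_{W^1} = \tfrac12\ddt\|u_h\|_\alpha^2$ and $\langle \gamma\dt u_h^*, u_h^*\rangle_{W^2} = \tfrac12\ddt\|u_h^*\|_\gamma^2$. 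This yields \eqref{eq:dissh}; non-positivity is immediate from positivity of $\beta$.

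There is essentially no hard step here, as the analysis on the continuous level was designed so that its dissipation argument uses only the pairing \eqref{eq:adjoint} at the test-function level, which is inherited by any conforming pair $V_h^1\subset V^1$, $W_h^2\subset W^2$. The one point worth highlighting is that $\d V_h^1 \not\subset W_h^2$ is \emph{not} needed for this lemma: we only use that $\d v_h\in W^2$ for $v_h\in V_h^1$, so that the pairing with $u_h^*(t)\in W_h^2\subset W^2$ is well defined. The Hilbert-complex compatibility $\d V_h^1\subset W_h^2$ will only enter later, when the refined estimates of Section~\ref{sec:proof} are transferred to the discrete setting.
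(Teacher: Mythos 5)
Your proposal is correct and follows essentially the same route as the paper: positivity of $\alpha$ and $\gamma$ makes the Galerkin system a regular linear ODE (invertible mass matrices), so Picard--Lindel\"of gives the unique global $C^1$ solution, and the energy identity \eqref{eq:dissh} follows by testing with $v_h=u_h(t)$, $v_h^*=u_h^*(t)$ exactly as in the continuous power balance \eqref{eq:diss}. Your added remark that $\d V_h^1\subset W_h^2$ is not needed at this stage is also consistent with the paper, where that condition only enters in Lemma~\ref{lem:42} and the discrete decay theorem.
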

The energy estimate follows by testing \eqref{eq:var1h}--\eqref{eq:var2h} with $v_h=u_h(t)$ and $v_h^*=u_h^*(t)$, and the same arguments as used on the continuous level.
We continue by defining 
\begin{align} \label{eq:auxh}
w_h(t) = w_h(0) + \int_0^t u_h(s) \, ds 
\quad \text{and} \quad 
w_h^*(t) = w_h^*(0) + \int_0^t u_h^*(s) \, ds,
\end{align}
with initial values $w_h(0)$, $w_h^*(0)$ making up a solution of 
\begin{alignat}{2} 
\langle \beta w_h(0), v_h\rangle_{W^1} - \langle w_h^*(0), \d v_h\rangle_{W^2} &= -\langle \alpha u_h(0), v_h\rangle_{W^1} \qquad && \forall v_h \in V_h^1 \label{eq:aux1h}\\
\langle \d w_h(0), v_h^*\rangle_{W^2} &= - \langle \gamma u_h^*(0), v_h^*\rangle_{W^2} \qquad &&\forall v_h^* \in W_h^2. \label{eq:aux2h}
\end{alignat}
To ensure existence of a solution $(w_h(0),w_h^*(0)) \in V_h^1 \times W_h^2$, we require a compatibility condition on the initial value $u_h^*(0)$ and an additional compatibility condition 
\begin{align} \label{eq:compat}
\d V_h^1 \subset W_h^2
\end{align}
on the discretization spaces. This allows us to split 
$W_h^2 = \d V_h^1 \oplus Y_h^2$,
with the orthogonal complement $Y_h^2 = \{v_h^* \in W_h^2 : \langle \gamma v_h^*,\d v_h\rangle_{W^2}=0 \ \forall v_h \in V_h^1\}$.
With very similar arguments as used on the continuous level, we obtain the following result. 
\begin{lemma} \label{lem:42}
Let $\d V_h^1 \subset W_h^2$ and assume that \begin{align} \label{eq:compath}
\langle \gamma u_h^*(0), y_h^* \rangle_{W^2}=0 \qquad  \text{for all } y_h^* \in Y_h^2.
\end{align}
Then the system \eqref{eq:aux1h}--\eqref{eq:aux2h} admits a 
unique solution $(w_h(0),w_h^*(0)) \in V_h^1 \times Y_h^2$. 
Moreover, the function $(w_h,w_h^*)$ defined in \eqref{eq:auxh} satisfies 
\begin{alignat}{2}
\langle\alpha\dt w_h(t), v_h\rangle_{W^1} - \langle w_h^*(t), \d v_h\rangle_{W^2} &= - \langle\beta w_h(t), v_h\rangle_{W^1}, \label{eq:var1hw} \\
\langle\gamma \dt w_h^*(t), v_h^*\rangle_{W^2} + \langle \d w_h(t), v_h^*\rangle_{W^2} &= 0, \label{eq:var2hw}
\end{alignat}
for all $v_h \in V_h^1$, $v_h^* \in W_h^2$, and $t \ge 0$, as well as 
\begin{alignat}{2}
\langle\alpha\dtt w_h(t), v_h\rangle_{W^1} - \langle \dt w_h^*(t), \d v_h\rangle_{W^2} &= - \langle\beta \dt w_h(t), v_h\rangle_{W^1}, \label{eq:var1hww} \\
\langle\gamma \dtt w_h^*(t), v_h^*\rangle_{W^2} + \langle \d \dt w_h(t), v_h^*\rangle_{W^2} &= 0. \label{eq:var2hww}
\end{alignat}
\end{lemma}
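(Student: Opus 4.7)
The proof plan closely mirrors that of Lemma~\ref{lem:aux} in the discrete setting, and splits naturally into two independent stages: the solvability of the static initial system \eqref{eq:aux1h}--\eqref{eq:aux2h} within $V_h^1 \times Y_h^2$, and the derivation of the time-dependent variational identities \eqref{eq:var1hw}--\eqref{eq:var2hww}.

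For the first stage, I would invoke a discrete analog of the generalized Brezzi lemma used in the proof of Lemma~\ref{lem:aux}. The positivity of $\beta$ provides the required coercivity of $a(w_h, v_h) = \langle \beta w_h, v_h\rangle_{W^1}$ on all of $V_h^1$, while the compatibility $\d V_h^1 \subset W_h^2$ plays the role of a discrete inf-sup condition on the off-diagonal form $b(v_h, v_h^*) = \langle v_h^*, \d v_h\rangle_{W^2}$, since $\d$ maps $V_h^1$ surjectively onto $\d V_h^1$ with a constant inherited from the continuous Poincaré inequality \eqref{eq:poincare}. The $\gamma$-orthogonal splitting $W_h^2 = \d V_h^1 \oplus Y_h^2$ then allows the saddle-point system to be reduced to two decoupled well-posed problems: the compatibility assumption \eqref{eq:compath} ensures that \eqref{eq:aux2h} is consistent with some $w_h(0) \in V_h^1$, and the $\beta$-coercivity together with the restriction $w_h^*(0) \in Y_h^2$ pin down the residual ambiguities in both components.

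For the second stage, the $C^1$ regularity of $(u_h, u_h^*)$ lifts the primitives $(w_h, w_h^*)$ defined in \eqref{eq:auxh} to class $C^2([0,\infty); V_h^1 \times W_h^2)$, with $\dt w_h = u_h$ and $\dt w_h^* = u_h^*$. Integrating the semidiscrete equation \eqref{eq:var1h} in time from $0$ to $t$ against a fixed test function $v_h \in V_h^1$, where $\d v_h \in W_h^2$ by $\d V_h^1 \subset W_h^2$, and absorbing the boundary contribution at $t=0$ by means of the initial identity \eqref{eq:aux1h}, produces \eqref{eq:var1hw}. The identity \eqref{eq:var2hw} follows in the same way from \eqref{eq:var2h} combined with \eqref{eq:aux2h}. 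The differentiated identities \eqref{eq:var1hww}--\eqref{eq:var2hww} then follow by differentiating \eqref{eq:var1hw}--\eqref{eq:var2hw} in time, which is justified by the $C^2$ regularity of $(w_h, w_h^*)$ and the identities $\dt w_h = u_h$, $\dt w_h^* = u_h^*$.

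I expect the main obstacle to lie in the uniqueness argument for the static data, which requires careful bookkeeping between the $\gamma$-orthogonal decomposition underlying the definition of $Y_h^2$ and the native $W^2$-inner product appearing in the variational forms; in particular, the constraint $w_h^*(0) \in Y_h^2$ is what removes the kernel of the coupling $w_h^* \mapsto \langle w_h^*, \d\, \cdot\rangle_{W^2}$, and \eqref{eq:compath} is precisely the consistency condition needed to accommodate it. Once this decomposition is correctly set up, the remaining steps reproduce the continuous proof of Lemma~\ref{lem:aux} essentially verbatim.
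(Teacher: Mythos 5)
Your overall route is the one the paper intends (it gives no separate proof of Lemma~\ref{lem:42}, pointing only to ``very similar arguments as on the continuous level''), and your second stage is sound: integrating \eqref{eq:var1h}--\eqref{eq:var2h} in time, cancelling the $t=0$ contributions by means of \eqref{eq:aux1h}--\eqref{eq:aux2h}, and then differentiating (justified since $u_h,u_h^*\in C^1$ gives $w_h,w_h^*\in C^2$) yields \eqref{eq:var1hw}--\eqref{eq:var2hww} exactly as in Lemma~\ref{lem:aux}.

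The gap sits in your first stage, at precisely the bookkeeping point you flag and then declare resolved. The consistency condition for \eqref{eq:aux2h} is \emph{not} \eqref{eq:compath} with the $\gamma$-orthogonal complement $Y_h^2$. Since $\d w_h(0)\in \d V_h^1\subset W_h^2$ and the test space is all of $W_h^2$, equation \eqref{eq:aux2h} forces $\d w_h(0)$ to be minus the $W^2$-orthogonal projection of $\gamma u_h^*(0)$ onto $W_h^2$; hence solvability requires $\langle\gamma u_h^*(0),y_h^*\rangle_{W^2}=0$ for all $y_h^*\in W_h^2$ with $\langle y_h^*,\d v_h\rangle_{W^2}=0$ for every $v_h\in V_h^1$, i.e.\ orthogonality against the \emph{unweighted} annihilator of $\d V_h^1$ in $W_h^2$, which differs from the $\gamma$-weighted $Y_h^2$ as soon as $\gamma$ does not interact trivially with the discrete spaces. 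A two-dimensional example ($W_h^2=W^2=\mathbb{R}^2$, $V_h^1=W^1=\mathbb{R}$, $\d x=(x,0)$, $\gamma$ a non-diagonal SPD matrix, $u_h^*(0)=(1,0)$) satisfies $\d V_h^1\subset W_h^2$ and \eqref{eq:compath}, yet \eqref{eq:aux2h} has no solution; so your claimed reduction ``the $\gamma$-orthogonal splitting decouples the saddle-point system'' does not go through with the unweighted pairings of \eqref{eq:aux1h}--\eqref{eq:aux2h} -- the splitting that decouples them is the $W^2$-orthogonal one, and it is against that complement that the compatibility of $u_h^*(0)$ must be imposed (this is also the quantity conserved in time by \eqref{eq:var2h}, which is what makes the condition natural). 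Similarly, seeking $w_h^*(0)$ in the $\gamma$-weighted $Y_h^2$ pins down existence and uniqueness of that component only if $Y_h^2$ meets the $W^2$-annihilator of $\d V_h^1$ trivially, which is not automatic; the clean discrete analogue of $w_0^*\in N(\d^*)^{\perp}$ in Lemma~\ref{lem:aux} is $w_h^*(0)\in \d V_h^1$. With the pairing read consistently in the plain $W^2$-inner product, every step of your outline does work, so the fix is a matter of stating the orthogonality correctly rather than of strategy. A minor additional slip: the discrete inf-sup constant is not ``inherited from the continuous Poincar\'e inequality \eqref{eq:poincare}'' -- conforming subspaces do not inherit inf-sup stability, which is why the paper postulates \eqref{eq:poincareh} via bounded cochain projections -- but since Lemma~\ref{lem:42} only needs finite-dimensional solvability, no uniform constant is required there and this remark is harmless.
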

For the discrete stability analysis, we require a discrete Poincar\'e inequality 
\begin{align} \label{eq:poincareh}
\|v_h\|_\alpha \le C_{\mathrm{P},h} \|\d v_h\|_{\gamma^{-1}} \qquad \forall v_h \in Z_h^{\perp_\beta},
\end{align}
where $Z_h=\{z_h \in V_h^1 : \d z_h = 0\}$ and $Z_h^{\perp_\beta} = \{v_h \in V_h^1 : \langle \beta v_h,z_h\rangle_{W^1} = 0 \ \forall z_h \in Z_h\}$. 
Validity of this condition can be established by the use of a bounded cochain projection; see \cite[Ch.~7]{Arnold2018} for details. 
We can now follow the proof of Theorem~\ref{thm:main} step-by-step to obtain the following discrete stability result.
\begin{theorem}
Let $\d V_h^1 \subset W_h^2$ and $(u_h,u_h^*)$ denote any solution of \eqref{eq:var1h}--\eqref{eq:var2h} satisfying the compatibility condition \eqref{eq:compath}. Then 
\begin{align}
\|u_h(t)\|_\alpha^2 + \|u_h^*(t)\|_{\gamma}^2 \le C'' e^{-c''(t-s)} \big( \|u_h(s)\|_\alpha^2 + \|u_h^*(s)\|_{\gamma}^2\big) \qquad \forall 0 \le s \le t,
\end{align}
with constants $C''$, $c''>0$ depending on the discrete Poincar\'e constant $C_{\mathrm{P},h}$, but otherwise independent of the spaces $V_h^1$, $W_h^2$, and the particular solution $(u_h,u_h^*)$.
\end{theorem}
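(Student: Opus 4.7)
The plan is to follow the proof of Theorem~\ref{thm:main} in Section~\ref{sec:proof} essentially line by line, with the continuous primitive from Lemma~\ref{lem:aux} replaced by the discrete one from Lemma~\ref{lem:42} and the continuous Poincaré inequality \eqref{eq:poincare} replaced by its discrete counterpart \eqref{eq:poincareh}. The compatibility $\d V_h^1 \subset W_h^2$ together with the initial compatibility \eqref{eq:compath} guarantees, via Lemma~\ref{lem:42}, the existence of the primitive $(w_h, w_h^*)$, and the identities \eqref{eq:var1hw}--\eqref{eq:var2hww} are exact discrete analogs of the identities used on the continuous level.

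I would introduce the modified discrete energy
\[
\E_{\delta, h}(t) := \tfrac{1}{2}\bigl(\|\dt w_h(t)\|_\alpha^2 + \|\dt w_h^*(t)\|_\gamma^2\bigr) + \delta \langle \dt w_h(t), w_h(t)\rangle_\alpha,
\]
and prove discrete counterparts of Lemma~\ref{lem:equiv} (the equivalence $\tfrac{1}{2}\E_{0, h} \le \E_{\delta, h} \le \tfrac{3}{2}\E_{0, h}$) and Lemma~\ref{lem:main} (the exponential decay $\ddt \E_{\delta, h} \le -\tfrac{2\delta}{3}\E_{\delta, h}$). For the equivalence, I would split $w_h(t) = w_{h, 0}(t) + w_{h, 1}(t)$ with $w_{h, 0}(t) \in Z_h$ and $w_{h, 1}(t) \in Z_h^{\perp_\beta}$, apply \eqref{eq:poincareh} to $w_{h, 1}(t)$, and test \eqref{eq:var1hw} with $v_h = w_{h, 0}(t) \in V_h^1$ to control the kernel component, using $\d w_{h, 0}(t) = 0$ to drop the $w^*$-term just as in the continuous proof. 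For the decay I would test \eqref{eq:var1hww} with $v_h = w_h(t) \in V_h^1$ and combine with \eqref{eq:var2hw} tested against $v_h^* = \dt w_h^*(t) \in W_h^2$; both choices are admissible since $w_h(t) \in V_h^1$ and $\dt w_h^*(t) = u_h^*(t) \in W_h^2$. Grönwall's inequality together with the equivalence then yields the claimed exponential decay of $\|u_h(t)\|_\alpha^2 + \|u_h^*(t)\|_\gamma^2 = 2 \E_{0, h}(t)$.

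The main technical point I anticipate concerns the step $\|\d w(t)\|_{\gamma^{-1}} = \|\dt w^*(t)\|_\gamma$ appearing at the end of \eqref{eq:est}, which in the continuous case is an exact equality deduced from the strong identity $\gamma \dt w^* + \d w = 0$. In the discrete setting \eqref{eq:var2hw} only enforces this identity variationally on $W_h^2$ and strict equality need not persist. Using $\d w_h(t) \in W_h^2$ (which is precisely where the compatibility $\d V_h^1 \subset W_h^2$ enters), I would test \eqref{eq:var2hw} with $v_h^* = \d w_h(t)$ and combine with Cauchy--Schwarz and the spectral equivalence between the $\gamma$-, $\gamma^{-1}$- and $W^2$-norms to obtain a bound $\|\d w_h(t)\|_{\gamma^{-1}} \le C \|\dt w_h^*(t)\|_\gamma$ with $C$ depending only on the spectral bounds of $\gamma$. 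This slightly enlarged constant is absorbed into the final constants $C''$ and $c''$, which therefore depend on $\cb$, $\Cb$, $C_{\mathrm{P}, h}$ and the spectral bounds of $\gamma$, but not on the particular choice of the discretization spaces $V_h^1$ and $W_h^2$.
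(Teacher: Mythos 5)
Your proposal is correct and takes essentially the same route as the paper, which at this point simply states that the proof of Theorem~\ref{thm:main} carries over step-by-step, with Lemma~\ref{lem:42} and the discrete Poincar\'e inequality \eqref{eq:poincareh} replacing their continuous counterparts. The one technical point you flag is indeed the only place where the transfer is not literal: since $\gamma \dt w_h^* + \d w_h$ is merely $W^2$-orthogonal to $W_h^2$ (as $\gamma$ need not preserve $W_h^2$), the exact equality $\|\d w_h\|_{\gamma^{-1}}=\|\dt w_h^*\|_\gamma$ can fail, and your remedy of testing \eqref{eq:var2hw} with $v_h^*=\d w_h(t)$ to get $\|\d w_h\|_{\gamma^{-1}}\le \sqrt{\|\gamma\|\,\|\gamma^{-1}\|}\,\|\dt w_h^*\|_\gamma$ is a valid fix, introducing only a constant determined by the spectral bounds of $\gamma$ (problem data), so the claimed dependence of $C''$, $c''$ only on $C_{\mathrm{P},h}$ and the continuous coefficients is preserved.
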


\subsection{Time discretization}

As a second result of this section, we show that exponential stability can also be preserved under appropriate discretization in time. 
Let $\tau>0$ be a fixed time step and set $t_n = n \tau$ for $n \ge 0$. We denote by $u_n \approx u(t_n)$ approximations of a function $u$ at the discrete time points, and write 
$$
\dtau u_n = \frac{1}{\tau} (u_n - u_{n-1}) 
$$
for the backwards difference quotient. 
For the approximation of \eqref{eq:1}--\eqref{eq:2}, we then consider sequences $(u_n,u^*_n)$, $n \ge 1$, defined recursively by 
\begin{alignat}{2}
\alpha \dtau u_n - \d^* u_n^* &= -\beta u_n, \qquad && \text{in } W^1, \ n \ge 1, \label{eq:1tau}\\
\gamma \dtau u_n^* + \d u_n &= 0 \qquad && \text{in } W^2, \ n \ge 1. \label{eq:2tau}
\end{alignat}
Appropriate initial values $u_0$ and $u^*_0$ have to be provided. 
Under the general assumptions stated in Section~\ref{sec:prelim}, we obtain the following result.
\begin{lemma}
For any given $(u_0,u_0^*) \in W^1 \times W^2$, the problem \eqref{eq:1tau}--\eqref{eq:2tau} defines a unique sequence $(u_n,u^*_n) \in V^1 \times V_2^*$, $n \ge 1$. 
Moreover, 
\begin{align*}
\dtau\frac12 \big( \|u_n\|_\alpha^2 + \|u_n^*\|_\gamma^2) \le -\|u_n\|_\beta^2 \le 0. 
\end{align*}
\end{lemma}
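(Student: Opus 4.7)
The plan is to do two things: show that each implicit step is well-posed by a Lax--Milgram argument after eliminating $u_n^*$, and then mimic the continuous energy estimate via a polarization identity for the backward-difference quotient.

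For well-posedness, inductively assume that $(u_{n-1},u_{n-1}^*)\in W^1\times W^2$ is given. I would solve \eqref{eq:2tau} explicitly for $u_n^* = u_{n-1}^* - \tau\gamma^{-1}\d u_n$, substitute into \eqref{eq:1tau}, and test against $v\in V^1$ using the adjoint relation~\eqref{eq:adjoint}. This reduces the step to the symmetric variational problem: find $u_n\in V^1$ such that
\begin{align*}
\tfrac{1}{\tau}\langle\alpha u_n,v\rangle_{W^1} + \langle\beta u_n,v\rangle_{W^1} + \tau\langle\gamma^{-1}\d u_n,\d v\rangle_{W^2} = \tfrac{1}{\tau}\langle\alpha u_{n-1},v\rangle_{W^1} + \langle u_{n-1}^*,\d v\rangle_{W^2}
\end{align*}
for all $v\in V^1$. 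The bilinear form on the left is bounded and coercive on $V^1$ equipped with its graph norm, thanks to the selfadjoint positive-isomorphism assumptions on $\alpha,\beta,\gamma$, so Lax--Milgram delivers a unique $u_n\in V^1$. Setting $u_n^*:=u_{n-1}^*-\tau\gamma^{-1}\d u_n$ places $u_n^*$ in $W^2$; that it actually belongs to $V_2^*$ follows by rearranging the weak form as $\langle u_n^*,\d v\rangle_{W^2}=\langle\alpha\dtau u_n+\beta u_n,v\rangle_{W^1}$, which exhibits $u_n^*\in\dom(\d^*)$ with $\d^* u_n^*=\alpha\dtau u_n+\beta u_n$, i.e., \eqref{eq:1tau} in strong form.

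For the discrete power balance I would test \eqref{eq:1tau} with $u_n$ and \eqref{eq:2tau} with $u_n^*$, cancelling the $\langle u_n^*,\d u_n\rangle_{W^2}$ cross terms via the adjoint relation, to reach
\begin{align*}
\langle\alpha\dtau u_n,u_n\rangle_{W^1}+\langle\gamma\dtau u_n^*,u_n^*\rangle_{W^2}=-\|u_n\|_\beta^2.
\end{align*}
The continuous chain rule $\ddt\tfrac12\|u\|_\alpha^2=\langle\alpha\dt u,u\rangle_{W^1}$ is then replaced by the polarization identity
\begin{align*}
2\langle\alpha(a-b),a\rangle_{W^1}=\|a\|_\alpha^2-\|b\|_\alpha^2+\|a-b\|_\alpha^2,
\end{align*}
valid because $\alpha$ is selfadjoint, together with its $\gamma$-analogue. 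Applied with $a=u_n$, $b=u_{n-1}$ and divided by $2\tau$, the nonnegative squared-increment terms may be discarded to give $\langle\alpha\dtau u_n,u_n\rangle_{W^1}\ge\dtau\tfrac12\|u_n\|_\alpha^2$ and its $\gamma$-counterpart; combining these with the identity above produces the claimed inequality, and non-positivity of the right-hand side is immediate from $\beta>0$.

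There is no genuine obstacle. The only subtlety is ensuring $u_n^*\in V_2^*$ rather than merely in $W^2$, which is invisible from the explicit update formula but falls out of \eqref{eq:1tau} once $u_n\in V^1$ is secured. In particular, I expect no CFL-type restriction on $\tau$, because the implicit discretization contributes the favourably signed correction $\|u_n-u_{n-1}\|_\alpha^2/(2\tau)$ to the energy balance, which is the discrete manifestation of the unconditional contractivity of the scheme.
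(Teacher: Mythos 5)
Your proposal is correct. The energy-estimate half is essentially identical to the paper's: the paper uses the identity $\dtau \|u_n\|_\alpha^2 = 2\langle\alpha\dtau u_n,u_n\rangle_{W^1} - \tau\|\dtau u_n\|_\alpha^2$ and discards the nonpositive term, which is exactly your polarization identity divided by $\tau$, followed by the same testing with $(u_n,u_n^*)$ and cancellation of the cross terms via \eqref{eq:adjoint}. Where you genuinely differ is the well-posedness of each implicit step: the paper disposes of it in one line by invoking a generalization of Brezzi's theory for perturbed saddle-point problems (citing Boffi--Brezzi--Fortin, Thm.~4.3.1), treating \eqref{eq:1tau}--\eqref{eq:2tau} as a mixed system with the penalty-type term $\tfrac{1}{\tau}\gamma u_n^*$, whereas you eliminate $u_n^*$ through the Schur complement $u_n^* = u_{n-1}^* - \tau\gamma^{-1}\d u_n$ and apply Lax--Milgram to the resulting symmetric coercive form on $V^1$ with the graph norm (closedness of $\d$ makes this a Hilbert space), recovering $u_n^*\in V_2^*$ and the strong form of \eqref{eq:1tau} from the rearranged weak identity, which is indeed the correct use of the adjoint's definition. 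Your route is more elementary and self-contained, and it makes transparent that no restriction on $\tau$ is needed; the paper's saddle-point viewpoint has the advantage of being the same abstract tool used for the initial-value system \eqref{eq:aux01}--\eqref{eq:aux02} and of carrying over verbatim to the Galerkin setting of Section~\ref{sec:disc}, where the elimination argument would have to be redone on the discrete spaces. Both arguments are sound, and your uniqueness claim is covered since any solution of \eqref{eq:1tau}--\eqref{eq:2tau} necessarily solves the reduced variational problem.
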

\begin{proof}
Existence of a unique solution for every time step follows from another generalization of Brezzi's lemma; see \cite[Thm.~4.3.1]{Boffi}. 
For the energy estimate, we note that $\dtau \|u_n\|_\alpha^2 = 2 \langle \alpha \dtau u_n,u_n\rangle_{W^1} - \tau \|\dtau u_n\|_\alpha^2 \le 2\langle \alpha \dtau u_n,u_n\rangle_{W^1}$, and similarly $\dtau \|u_n^*\|_\gamma^2 \le 2\langle \gamma \dtau u_n^*, u_n^*\rangle_{W^2}$. The estimate then follows immediately by testing \eqref{eq:1tau}--\eqref{eq:2tau} with $v=u_n$ and $v^* = u_n^*$, using the same arguments as on the continuous level.  
\end{proof}
For the proof of exponential stability, we again introduce discrete primitives by 
\begin{align}\label{eq:auxn}
w_n = w_0 + \tau\sum\nolimits_{k=1}^n u_k  
\qquad \text{and} \qquad 
w_n^* = w^*_0 + \tau\sum\nolimits_{k=1}^n u^*_k. 
\end{align}
The initial values $w_0$ and $w_0^*$ are chosen like in Section~\ref{sec:proof}.
From the linearity of the problem and the use of equidistant time steps, we readily obtain the following result. 
\begin{lemma}
The sequence $(w_n,w_n^*)$ defined in \eqref{eq:auxn} satisfies 
\begin{alignat}{2}
\alpha \dtau w_n - \d^* w_n^* &= -\beta w_n\,, \qquad && n \ge 1, \label{eq:1tauw}\\
\gamma \dtau w_n^* + \d w_n &= 0\,, \qquad && n \ge 1. \label{eq:2tauw}
\end{alignat}
Furthermore, there holds
\begin{alignat}{2}
\alpha \ddtau w_n - \d^* \dtau w_n^* &= -\beta \dtau w_n\,, \qquad && n \ge 2, \label{eq:1tauww}\\
\gamma \ddtau w_n^* + \d \dtau w_n &= 0\,, \qquad && n \ge 2, \label{eq:2tauww}
\end{alignat}
where $\ddtau u_n = \frac{1}{\tau^2} (u_n - 2 u_{n-1} + u_{n-2})$ is the second backward difference quotient.
\end{lemma}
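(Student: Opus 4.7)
The plan is to mimic the proof of Lemma~\ref{lem:aux} step-by-step at the discrete level, exploiting the fact that the backwards difference operator $\dtau$ commutes with the time-independent spatial operators $\alpha$, $\beta$, $\gamma$, $\d$, $\d^*$, and that telescopic summation plays the role that time-integration served on the continuous level.

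First I would observe that $\dtau w_n = u_n$ and $\dtau w_n^* = u_n^*$ hold directly from the definition \eqref{eq:auxn}. To obtain \eqref{eq:1tauw}, I would multiply \eqref{eq:1tau} by $\tau$ and sum from $k=1$ to $n$. The first term telescopes into $\alpha(u_n - u_0) = \alpha \dtau w_n - \alpha u_0$, the second into $\d^*(w_n^* - w_0^*)$, and the right-hand side into $-\beta(w_n - w_0)$. Rearranging yields
\[
\alpha \dtau w_n - \d^* w_n^* + \beta w_n \;=\; \alpha u_0 + \beta w_0 - \d^* w_0^*,
\]
and the right-hand side vanishes by the choice of $(w_0, w_0^*)$ as a solution of \eqref{eq:aux01}. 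An entirely analogous telescoping argument applied to \eqref{eq:2tau}, combined with \eqref{eq:aux02}, yields \eqref{eq:2tauw}.

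For the second pair \eqref{eq:1tauww}--\eqref{eq:2tauww} I would simply apply $\dtau$ to the identities \eqref{eq:1tauw}--\eqref{eq:2tauw} just established. Since the spatial operators are independent of $n$, they commute with $\dtau$, and $\dtau \dtau w_n = \ddtau w_n$ by definition. Forming the difference of the two identities at consecutive indices $n$ and $n-1$ and dividing by $\tau$ then gives the claim; this requires $w_{n-1}$ to already satisfy \eqref{eq:1tauw}--\eqref{eq:2tauw}, which explains the restriction $n \ge 2$.

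No real obstacle arises beyond bookkeeping: the only delicate point is that the boundary contributions $\alpha u_0$, $\gamma u_0^*$, $\beta w_0$, $\d^* w_0^*$, $\d w_0$ left over by telescoping must cancel exactly, and this is precisely what the choice of $(w_0, w_0^*)$ via \eqref{eq:aux01}--\eqref{eq:aux02} enforces. Notably, no discrete Poincar\'e or stability-type assumption enters at this stage, so the lemma holds unconditionally once the initial correction has been set up correctly.
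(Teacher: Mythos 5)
Your proof is correct and is essentially the argument the paper intends: the paper omits the proof ("from the linearity of the problem and the use of equidistant time steps, we readily obtain..."), and your telescoping-summation argument with cancellation of the boundary terms via \eqref{eq:aux01}--\eqref{eq:aux02} is exactly the discrete analogue of the integration argument used for Lemma~\ref{lem:aux}. (A marginally shorter route for \eqref{eq:1tauww}--\eqref{eq:2tauww} is to note that $\dtau w_n=u_n$, $\dtau w_n^*=u_n^*$ and $\ddtau w_n=\dtau u_n$ for $n\ge 2$, so these identities are just \eqref{eq:1tau}--\eqref{eq:2tau} rewritten, but your differencing of \eqref{eq:1tauw}--\eqref{eq:2tauw} is equally valid.)
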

One can then again walk through the proof of Theorem~\ref{thm:main} step-by-step, which leads to the following stability results for the time-discrete problem. 
\begin{theorem}
Under the assumptions of Theorem~\ref{thm:main}, the discrete solutions $(u_n,u_n^*)$ obtained by \eqref{eq:1tau}--\eqref{eq:2tau} with initial value $(u_0,u_0^*)=(u(0),u^*(0))$ satisfy 
\begin{align*}
\bigl(\|u_n\|_\alpha^2 + \|u_n^*\|_\gamma^2\bigr) \le C' e^{-c' (t_n - t_m)} \big( \|u_m\|_\alpha^2 + \|u_m^*\|_\gamma^2\big) \qquad \forall 0 \le m \le n. 
\end{align*}
The constants $C'$, $c'>0$ can be chosen the same as in the proof of Theorem~\ref{thm:main}.
\end{theorem}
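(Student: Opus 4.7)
The plan is to run the proof of Theorem~\ref{thm:main} verbatim at the discrete level, exploiting that by \eqref{eq:auxn} we have $\dtau w_n = u_n$ and $\dtau w_n^* = u_n^*$. Concretely, I would introduce the modified discrete energy
\begin{align*}
\E_{\delta,n} := \tfrac{1}{2}\bigl(\|\dtau w_n\|_\alpha^2 + \|\dtau w_n^*\|_\gamma^2\bigr) + \delta\,\langle \dtau w_n, w_n\rangle_\alpha,
\end{align*}
with $0 < \delta \le \delta^{**}$, and target the two analogues of Lemmas~\ref{lem:equiv} and~\ref{lem:main}: the equivalence $\tfrac{1}{2}\E_{0,n} \le \E_{\delta,n} \le \tfrac{3}{2}\E_{0,n}$ and the one-step decay inequality $\dtau \E_{\delta,n} \le -\tfrac{2\delta}{3}\E_{\delta,n}$. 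A discrete Grönwall step then delivers the claimed exponential bound and, combined with the equivalence, the statement of the theorem.

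The equivalence step should carry over verbatim. At each fixed $n$, the splitting $w_n = w_n^{(0)} + w_n^{(1)}$ along $N(\d)$ and $N(\d)^{\perp_\beta}$ is available. The Poincar\'e inequality~\eqref{eq:poincare} together with \eqref{eq:2tauw} yields $\|w_n^{(1)}\|_\alpha \le \CP \|\dtau w_n^*\|_\gamma$, while testing \eqref{eq:1tauw} against $w_n^{(0)} \in N(\d)$ and using \eqref{eq:norm_equivalence} gives $\|w_n^{(0)}\|_\alpha \le \tfrac{1}{\cb}\|\dtau w_n\|_\alpha$. Summing and applying Young's inequality reproduces the bound of Lemma~\ref{lem:equiv} with the same threshold $\delta^*$.

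For the decay step I would compute $\dtau \E_{\delta,n}$ using the discrete identity $\dtau \tfrac{1}{2}\|a_n\|^2 = \langle \dtau a_n, a_n\rangle - \tfrac{\tau}{2}\|\dtau a_n\|^2$ together with the discrete product rule $\dtau \langle \dtau w_n, w_n\rangle_\alpha = \langle \ddtau w_n, w_n\rangle_\alpha + \langle \dtau w_{n-1}, \dtau w_n\rangle_\alpha$. Testing \eqref{eq:1tauww}--\eqref{eq:2tauww} against $\dtau w_n$ and $\dtau w_n^*$ produces $\langle \alpha \ddtau w_n, \dtau w_n\rangle_{W^1} + \langle \gamma \ddtau w_n^*, \dtau w_n^*\rangle_{W^2} = -\|\dtau w_n\|_\beta^2$, while testing \eqref{eq:1tauww} with $w_n$ and using \eqref{eq:2tauw} yields the key identity $\langle \alpha \ddtau w_n, w_n\rangle_{W^1} = -\langle \beta \dtau w_n, w_n\rangle_{W^1} - \|\dtau w_n^*\|_\gamma^2$, exactly as in the continuous Lemma~\ref{lem:main}. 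Combining with the estimate from the equivalence step and Young's inequality should then give the same decay bound. An iteration via $(1+\tfrac{2\delta}{3}\tau)^{-k} \le e^{-\tfrac{2\delta}{3}k\tau/(1+\tfrac{2\delta}{3}\tau)}$ followed by the equivalence reproduces the exponential estimate in $t_n - t_m$.

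The main obstacle, I expect, is the bookkeeping of the $\mathcal{O}(\tau)$ terms produced by the non-commutation of $\dtau$ with products. Concretely, rewriting $\dtau w_{n-1} = \dtau w_n - \tau \ddtau w_n$ turns the cross term into $\langle \dtau w_{n-1}, \dtau w_n\rangle_\alpha = \|\dtau w_n\|_\alpha^2 - \tau \langle \ddtau w_n, \dtau w_n\rangle_\alpha$, which introduces a correction $-\delta\tau \langle \dtau u_n, u_n\rangle_\alpha$ of indefinite sign. This has to be absorbed using the dissipation $-\|\dtau w_n\|_\beta^2$ together with the nonpositive byproduct $-\tfrac{\tau}{2}(\|\ddtau w_n\|_\alpha^2 + \|\ddtau w_n^*\|_\gamma^2)$ generated by the $\dtau$-square identity. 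Verifying that this absorption works uniformly in $\tau$ without shrinking the thresholds $\delta^*, \delta^{**}$---so that the constants $C',c'$ from Theorem~\ref{thm:main} transfer unchanged to the time-discrete setting---is the step requiring the most care.
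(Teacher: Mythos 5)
Your proposal follows essentially the same route as the paper, which for this theorem merely states that one walks through the proof of Theorem~\ref{thm:main} step-by-step using the discrete primitives $(w_n,w_n^*)$ from \eqref{eq:auxn}, the identities \eqref{eq:1tauw}--\eqref{eq:2tauww}, a discrete modified energy, and a discrete Gr\"onwall argument. In fact you go beyond the text by pinpointing where the omitted work lies: the indefinite $\mathcal{O}(\tau)$ cross term produced by the discrete product rule, which has to be absorbed by the favourable $-\tfrac{\tau}{2}\|\ddtau w_n\|_\alpha^2$- and $-\tfrac{\tau}{2}\|\ddtau w_n^*\|_\gamma^2$-type remainders coming from the backward-difference square identity (the extra dissipativity of implicit Euler), which is exactly the bookkeeping the paper glosses over.
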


\subsection*{Concluding remarks}

It is possible to combine a compatible Galerkin approximation with an implicit time discretization. The resulting fully discrete scheme still retains the stability properties of the continuous problem. 
One can also formulate higher order time discretization schemes which preserve the exponential stability. 
The stability of the discrete problem further allows to derive discretization error estimates which are uniform in the time variable. 
We refer to \cite{Kugler2018} for some results in these directions.

\section{Examples}
\label{sec:examples}

To illustrate the wide applicability of our main results, we now discuss a few typical examples that fit into our abstract framework and discuss the assumptions needed for our analysis and the compatible approximation.

\subsection{Electrodynamics}

Let us return to Maxwell's equations
\begin{alignat}{2}
\eps \dt E - \curl H &= -\sigma E  \qquad &&\text{in } \Omega, \ t \ge 0, \label{eq:max1}\\
\mu \dt H + \curl_0 E &= 0 \qquad  && \text{in } \Omega, \ t \ge 0  \label{eq:max2}
\end{alignat}
already discussed in the introduction.
The subscript in $\curl_0 E$ means that the $E \times n=0$ is required on the boundary. 
This example fits into our abstract framework with spaces and operators defined by $W^1=W^2=L^2(\Omega)^3$, $\d=\curl_0$, $V^1=H_0(\curl)$, $\d^*=\curl$, $V_2^* = H(\curl)$. 
The solution components $u=E$ and $u^*=H$ correspond to the electric and magnetic field intensities. 
Furthermore,  $\alpha u =\eps E$, $\beta u =\sigma E$, $\gamma u^*=\mu H$ are defined by multiplication with the corresponding material parameters.  
The assumptions on the operators $\alpha$, $\beta$, $\gamma$ are met, e.g., if $\eps$, $\sigma$, $\mu$ are uniformly positive and bounded. 

As already mentioned in Section~\ref{sec:intro}, the compatibility condition $\gamma u^*(0) \in \mathcal{R}(\d)$ here means that $B(0)=\mu H(0) = \curl A(0)$ for some vector potential $A(0) \in H_0(\curl)$. 
This implies that $\div B(0)=0$ in $\Omega$ and $B(0) \cdot n = 0$ on $\partial\Omega$. Due to equation~\eqref{eq:max2}, these conditions remain valid for all time $t \ge 0$. Under this natural condition, exponential decay of the solution follows immediately from our abstract theory. 

To obtain a compatible space discretization, we consider a finite-element approximation on a conforming tetrahedral mesh $\T_h$ of $\Omega$. We choose $V_h^1=\N_k(\T_h) \cap H_0(\curl)$ and $W_h^2 = P_k(\T_h)^3$, i.e., by N\'ed\'elec and discontinuous finite elements of order $k$, respectively; see \cite{Boffi,Nedelec80} fro details. 
For this choice, the condition $\d V_h^1 \subset W_h^2$ is valid. Furthermore, a discrete Poincar\'e inequality \eqref{eq:poincareh} holds with $C_{\mathrm{P},h}$ depending only on the shape regularity of the mesh $\T_h$ and on the polynomial degree; see \cite{Arnold2018,Monk}. 
The numerical approximations obtained with this strategy again decay exponentially with a similar rate as the continuous solution.
Using the close connection of mixed finite element methods with FIT or FDTD methods \cite{Cohen02,Radu22}, similar decay results could be obtained for these kind of methods, at least for the semi-discretization in space.
\begin{remark}
In passing by we remark that our abstract framework also captures certain linear Kirchhoff networks. For instance, consider a network whose branches consist of resistances and inductances, while the nodes are connected by capacitances to a ground node. Then $u$ is the vector of branch currents, $u^*$ is the vector of nodal potentials, $\d$ is the discrete divergence, $\d^*$ is the discrete gradient, $\alpha$ is the symmetric positive definite inductance matrix, which entails self and mutual branch inductances, $\beta$ the positive diagonal resistance matrix, and $\gamma$ the positive diagonal node capacitance matrix. Equations \eqref{eq:1} and \eqref{eq:2} are Kirchhoff's voltage and current laws, respectively.
\end{remark}

\subsection{Vibration of a membrane}
As a different area of application, let us consider the vibration of a membrane which is assumed fixed across a flat frame. 
The vertical deflection of the membrane can be described by the system
\begin{alignat}{2}
\rho_0 \dt v - \div \sigma &= -c v && \qquad \text{in } \Omega, \ t \ge 0,\\
\kappa^{-1} \dt \sigma - \nabla_0 v &= 0 && \qquad \text{in } \Omega, \ t \ge 0.
\end{alignat}
Here $v$ is the vertical velocity and  $\sigma$ represents the tension forces inside the membrane. 
Furthermore, $\rho_0$ is related to the inertia, $\kappa$ is the stiffness of the membrane, and $c$ denotes a friction coefficient, which represents the damping through the surrounding medium. 
This problem again fits into our abstract setting: Here $W^1=L^2(\Omega)$, $W^2=L^2(\Omega)^2$, and $\d=-\nabla_0$ is the gradient with zero boundary conditions; its domain is $V^1=H_0^1(\Omega)$. The adjoint operator is $\d^* = \div$ with domain $V_2^*=H(\div)$; see \cite{Arnold2018}. 
The material operators $\alpha$, $\beta$, $\gamma$ again amount to multiplication with the corresponding coefficients.

The compatibility conditions for the initial value here reads $\kappa^{-1} \sigma(0) = \nabla_0 u(0)$ for some $u(0) \in H_0^1(\Omega)$. This condition is met, for instance, for initial value $\sigma(0)=0$. 
By our main theorem, we then obtain exponential decay of the solution to steady state.

A compatible Galerkin approximation can be obtained as follows: Let $\T_h$ denote a conforming triangulation of the domain $\Omega$. We choose $V_h^1 = \P_{k+1}(\T_h) \cap H_0^1(\Omega)$ and $W_h^2 = \P_k(\T_h)^2$ consisting of continuous resp.~discontinuous finite elements with appropriate polynomial degree. These spaces meet the compatibility condition $\d V_h^1 \subset W_h^2$, and we can again predict the exponential decay of the discrete solutions.

\subsection{Elastodynamics}

We consider the propagation of waves in a viscoelastic medium of Maxwell-type with deformation fixed at the boundary. 
This can be modeled by 
\begin{alignat}{2}
A \dt \sigma - \eps_0(v) &= -B\sigma\qquad && \text{in } \Omega, \ t\ge 0,\\
\rho_0 \dt v - \div \sigma &= 0 \qquad\qquad &&\text{in } \Omega, \ t \ge 0. 
\end{alignat}
Here $A$ is the inverse of Hooke's tensor, $B$ represents the friction law, and $\rho_0$ denotes the mass density of the body~\cite{Auld1990,Wriggers2008}. 
Furthermore, $v$ denotes the displacement velocity, $\sigma$ the Cauchy tress tensor, and $\eps_0(v)$ represents the linearized strain rate tensor complemented by homogeneous boundary conditions. 
This problem again fits into our abstract setting: Here $W^1=L^2(\Omega,\mathbb{R}^{3 \times 3}_\mathrm{sym})$ is the space of square integrable symmetric tensor fields, $W^2=L^2(\Omega)^3$, and $\d = \div$ is the row-wise divergence with domain $V^1 = H(\div;\RR^{3 \times 3}_\mathrm{sym})$. The adjoint operator is $\d^*=\eps_0(\cdot)$ with domain $V_2^*=H_0^1(\Omega)^3$. The spaces and operators thus form a segment of the elasticity complex \cite{Arnold2010,Pauly2023}.  

The compatibility condition for the initial value here reads $\rho_0 v(0) = \div \sigma'(0)$ with some $\sigma'(0) \in V^1$.
Validity of this conditions follows immediately from the exactness of the elasticity complex \cite{Arnold2010,Pauly2023}.  By application of our abstract results, we thus obtain exponential convergence of the system to steady state.

A compatible Galerkin approximation by mixed finite elements can here be obtained as follows. Let $\T_h$ be a conforming tetrahedral mesh. We then choose piecewise polynomials $W_h^2=P_k(\T_h)^3$ of order $k$ for the velocities and $V_h^1 = \AAW_k(\T_h) \cap H(\div;\RR^{3 \times 3}_\mathrm{sym})$ for the stresses; here $\AAW_k$ denotes the stress element of order $k$ developed by Arnold, Awanou, and Winther~\cite{Arnold2008}.
This choice of elements satisfies $\d V_h^1 \subset W_h^2$ and a discrete Poincar\'e inequality \eqref{eq:poincareh} holds with $C_{\mathrm{P},h}$ independent of the mesh size. We can thus again predict the uniform exponential decay for the discrete approximations.

\subsection{Acoustics}

We consider the propagation of sound waves in a closed cavity between two parallel plates. 
If the distance of the two plates is very small compared to their elongation, we may use a two-dimensional model, which reads 
\begin{alignat}{2}
\rho_0^{-1} \dt m + \nabla \hat p &= -c_f m \qquad && \text{in } \Omega, \ t \ge 0,\\
c_0^{-2} \dt \hat p + \div_0 m &= 0 \qquad && \text{in } \Omega, \ t \ge 0. 
\end{alignat}
Here $\rho_0$ is the density of the background medium, $c_0$ the speed of sound, $m = \rho_0 v$ the momentum density, and  $\hat p = p/\rho_0$ the kinematic pressure. 
The coefficient $c_f$ describes damping due to friction at the surface of the enclosing plates. 
This problem again fits into the abstract setting considered in this paper: Here 
$W^1=L^2(\Omega)^2$, $W^2=L^2(\Omega)$, and $\d = \div_0$ with zero boundary conditions $m \cdot n=0$ on $\partial\Omega$ and domain $V^1=H_0(\div)$. The corresponding adjoint operator is $\d^* =-\nabla$ with domain $V_2^*=H^1(\Omega)$; see \cite{Arnold2018}. 
The material operators $\alpha$, $\beta$, $\gamma$ again amount to multiplication with the parameters.

The compatibility condition for the initial values here reads $c_0^{-2} p(0)=\div M(0)$ for some $M(0) \in H_0(\div)$. This implies that 
$\int_\Omega c_0^{-2} p(0) = \int_\Omega \div M(0) = \int_{\partial\Omega} M(0) \cdot n = 0$.
The pressure average thus is the persistent mode which, however, can be fixed to zero without loss of generality.
By our abstract theory, we can then guarantee exponential convergence of solutions to steady state.

To obtain a compatible finite element approximation, let $\T_h$ be a conforming triangulation of $\Omega$. Then choose $V_h^1=\RT_k(\T_h) \cap H_0(\div)$ and $W_h^2=\P_k(\T_h)$, which are the Raviart-Thomas and discontinuous finite elements of order $k$. 
For this choice, the condition $\d V_h^1 \subset W_h^2$ is again valid; see \cite{Boffi} for details. 
As a consequence, we obtain exponential decay also for the finite-element approximations of our problem.

\begin{remark}
A similar one-dimensional system can be used to model the propagation of acoustic waves in pipes. 
Together with appropriate coupling conditions, one can then describe sound propagation in pipe networks. 
Corresponding results concerning exponential decay of the continuous and discrete solutions were derived in \cite{Kugler2018}. 
\end{remark}

\section{Discussion}

In this paper, we proved exponential stability of a class of wave propagation problems with uniform damping. 
As demonstrated by some examples, the analysis was done in the framework of Hilbert complexes, which offers all the structure and ingredients needed in our proofs, i.e., a weak characterization of solutions, energy estimates, and a generalized Poincar\'e inequality.  
The few and elementary  assumptions can be verified for a variety of applications as well as compatible discretizations thereof. 

Before we close the presentation, let us briefly mention some possible extensions, which might be worth further consideration: 
To simplify the implementation, the application of non-conforming approximations may be desireable \cite{Arnold2014,Arnold2014b}. %
While higher order approximations in space are straight forward, the extension of our results to higher order time-stepping schemes may require a more delicate analysis; see \cite{EggerKugler2018,Kugler19} for some results in this direction. 
In view of \cite{Egger2020,Kugler19}, also the extension to nonlinear damping terms seems possible. The consideration of nonlinear energies seems more difficult but would have interesting applications, e.g., in high intensity ultrasound or nonlinear optics.
An additional topic of interest is the uniformity of the decay rate in quasi-static limits. These are of relevance in electrodynamics as well as in acoustics and elastodynamics. Again, some preliminary results in this direction can be found in  \cite{EggerKugler2018,Kugler19}.

{\small 
\section*{Acknowledgements}

This work was partially supported by the international Collaborative Research Center CREATOR jointly funded by FWF and DFG. 
An essential step in clarifying the theoretical foundations could be made due to the inspiring atmosphere  at the Oberwolfach  Workshop on "Hilbert Complexes: Analysis, Applications, and Discretizations" 2022 organized by Ana Alonso, 
Doug Arnold, Dirk Pauly, and 
Francesca Rapetti. 
}

%\bibliographystyle{abbrv}
%\bibliography{stability}

\end{document}